\newcommand{\R}{\mathbb{R}}
\newcommand{\N}{\mathbb{N}}
\newcommand{\Q}{\mathbb{Q}}
\newcommand{\Z}{\mathbb{Z}}
\newcommand{\T}{\mathbb{T}}
\newcommand{\C}{\mathcal{C}}
\newcommand{\cP}{\mathcal{P}}
\newcommand{\G}{\mathcal{G}}
\newcommand{\ilim}{\varprojlim} %% Inverse limit was {\underleftarrow\lim\ }
\newcommand{\dlim}{\varinjlim} %% Direct limit
\newcommand{\OP}{\Omega_{\Phi}}
\newcommand{\larr}{\left( \begin{array}{c}}
\newcommand{\rarr}{\end{array} \right) }
\newcommand{\lsqarr}{\left[ \begin{array}{c}}
\newcommand{\rsqarr}{\end{array} \right]}
\newcommand{\inv}{\varprojlim}
\newtheorem{theorem}{Theorem}
\newtheorem{Theorem}[theorem]{Theorem}
\newtheorem{cor}[theorem]{Corollary}
\newtheorem{lemma}[theorem]{Lemma} \newtheorem{prop}[theorem]{Proposition}
\newcommand{\Xmax}{X_{max}}
\newcommand{\E}{\mathcal{E}}
\newcommand{\coker}{\mbox{\rm coker\,}}
\newcommand{\freq}{\mbox{\rm freq}}
\newcommand{\Inf}{\mbox{\rm Inf}}
\newcommand{\im}{\mbox{\rm im}}
\newcommand{\Fi}{{\frak C}}
\newcommand{\np}{{n^\perp}}
\newcommand{\rk}{\mbox{\rm rk}}
\begin{document}

\title{Maximal equicontinuous factors and cohomology for tiling spaces}
\author{Marcy Barge} 
\address{Department of Mathematical Sciences,
Montana State University,
Bozeman, MT 59717-0240, USA}

\email{barge@math.montana.edu}
\author{Johannes Kellendonk}
\address{Universit\'e de Lyon, Universit\'e Claude Bernard Lyon 1,
Institut Camille Jordan, CNRS UMR 5208, 43 boulevard du 11 novembre
1918, F-69622 Villeurbanne cedex, France}
\email{kellendonk@math.univ-lyon1.fr}
\author{Scott Schmieding}
\address{Department of Mathematics,
University of Maryland,
College Park, MD 20742-4015,
USA}
\email{scott@math.umd.edu}
\date{\today}

\begin{abstract} We study the homomorphism induced on cohomology by the maximal equicontinuous factor map of a tiling space. We will see that this map is injective in degree one and
has torsion free cokernel. We show by example, however, that the cohomology of the maximal equicontinuous factor may not be a direct summand of the tiling cohomology. 
\end{abstract} 

\maketitle
\section{Introduction}
An effective procedure for studying the properties of a tiling, or point-pattern, $T$ of $\R^n$
is to consider the space $\Omega$ (called the {\em hull} of $T$) of all tilings that, up to translation, are locally indistinguishable from $T$. Dynamical properties of the action of $\R^n$ on $\Omega$, by translation, correspond to combinatorial properties of $T$. Regularity assumptions on $T$ guarantee that the dynamical system $(\Omega,\R^n)$
is compact and minimal. There is then a {\em maximal equicontinuous factor} $(\Omega_{max},\R^n)$, with semi-conjugacy $\pi:\Omega\to \Omega_{max}$; $\Omega_{max}$ is a compact abelian group on which $\R^n$ acts by translation and every equicontinuous factor of $(\Omega,\R^n)$ is a factor of $(\Omega_{max},\R^n)$. 

The relationship between the hull of a tiling and its maximal equicontinuous factor is of fundamental importance in certain aspects of tiling theory. 
For example, if $T$ is a (sufficiently well-behaved) distribution of ``atoms" in $\R^n$, the diffraction spectrum of $T$ is pure point (that is, $T$ is a perfect quasicrystal) if and only if the dynamical spectrum of $(\Omega,\R^n)$ is pure discrete (\cite{LMS}, \cite{D}), if and only if the 
factor map $\pi$ is $a.e.$ one-to-one (with respect to Haar measure, \cite{BargeKellendonk}).

In this article we study the properties of the homomorphism $\pi^*$ induced by the factor map $\pi$ in cohomology. This is directly motivated by a recent formulation of the {\em Pisot Substitution Conjecture} (\cite{BG}) in terms of the homological properties of $\pi^*$.
More generally, cohomology has long been a primary tool for understanding the structure of $\Omega$ (\cite{AP}, \cite{Sadun}, \cite{FHK}) and, at least for tilings with a non-trivial discrete component of dynamical spectrum, the pull-back of the cohomology of the maximal equicontinuous factor represents a sort of skeleton supporting the rest of the cohomology of $\Omega$.

The maximal equicontinuous factor of a tiling dynamical system is always a torus or solenoid so its cohomology (as a ring) is determined by its degree one cohomology. Consequently, our focus will be on $\pi^*$ in degree one (this is also the important degree for deformation theory (\cite{ClarkSadun06},\cite{Kellendonk08}) and the Pisot Substitution Conjecture), though we will have something to say in higher degree for projection patterns, in which cohomology is tied to complexity. A main result is that $\pi^*$ is injective in degree one with torsion-free cokernel. We will show by example, however,  that the first cohomology of the maximal equicontinuous factor is not necessarily a direct summand
of the first cohomology of $\Omega$. 

Let us say a few words about our methods. Given a continuous map $f:\Omega\to\T$ of the hull to the unit circle, and a vector $v\in\R^n$, there is a {\em Schwartzman winding number} $\tau(f)(v)$ of $f$ with respect to the $\R$-action
$T'\mapsto T'-tv$ on $\Omega$ in direction $v$ (\cite{Schwartzman}). This defines a functional, $v\mapsto
 \tau(f)(v)$, which depends only on the homotopy class of $f$. As the group of homotopy classes of maps of $\Omega$ to $\T$ is naturally isomorphic with the first integer cohomology $H^1(\Omega)$ of $\Omega$, $\tau$ provides a homomorphism 
from $H^1(\Omega)$ to ${\R^n}^*$. We will see that the degree one cohomology of the maximal equicontinuous factor can be identified with the group $\mathcal{E}$ of continuous eigenvalues of the $\R^n$-action on $\Omega$. Each eigenvalue, in turn, determines a functional on $\R^n$. With these identifications, $\tau\circ\pi^*$ is the identity, establishing that $\pi^*$ is injective in degree one.

The homomorphism $\tau$ described above is the degree one part of the Ruelle-Sullivan map (\cite{KellendonkPutnam06}). In the top degree, $n$, $\tau$ has an interpretation as
the homomorphism that assigns to each finite patch of a tiling $T$ its frequency of occurrence in $T$. The range of $\tau$ is then the {\em frequency module} $\freq(\Omega)$ of $\Omega$ and its kernel is the group $\Inf(\Omega)$ of {\em infinitessimals} with respect to a natural order on the top degree cohomology. In the special case of one-dimensional tilings, we have two related short exact sequences with $H^1(\Omega)$ in the middle: 
$$ 0\to \mathcal{E}\stackrel{\pi^*}\longrightarrow H^1(\Omega)\to\coker\pi^*\to 0$$
and
$$ 0\to \Inf(\Omega)\to H^1(\Omega)\stackrel{\tau}\longrightarrow
\freq(\Omega)\to 0.$$
This situation is considered, in the context of symbolic substitutions, in \cite{AndressRobinson}. 
We consider tilings that arise from substitutions, as well as tilings that arise from projection methods. For almost canonical projection tilings, both of these sequences split. We will give conditions under which this is true for substitution tilings, as well as examples in which one, or both, don't split.

In the next section, we briefly review the basics of substitution tilings, projection methods, tiling cohomology, and the construction of the maximal equicontinuous factor.
In Section \ref{The factor map and cohomology} we consider the map induced in cohomology by the maximal equicontinuous factor map, and in Sections \ref{Almost canonical projection patterns} and \ref{Substitution tilings} we restrict consideration to almost canonical projection patterns and substitution tilings, resp.
 
\section{Preliminaries}

\subsection{Tilings and their properties}
We will use the formulations and terminology of \cite{BargeKellendonk} and
just recall here what is necessary to set up the notation. 

An $n$-dimensional tiling is an infinite collection of tiles which cover
$\R^n$ and have pairwise disjoint interiors. Here a tile is a compact 
subset of $\R^n$ which is the closure of its interior. A tile may
carry a mark in case a distinction between geometrically
congruent tiles is necessary. 
A (finite) patch is a finite collection of tiles with pairwise disjoint
interiors. Its diameter is the diameter of the set covered by its
tiles.

The translation group $\R^n$ acts on tiles, patches and
tilings as on all geometric objects of $\R^n$ 
and we write this action by $t\cdot O$ or $O-t$ with
$t\in\R^n$ and $O$ the geometric object.
A collection $\Omega$ of tilings of $\R^n$
has (translationally) finite local complexity (FLC)
if it is the case that for each $R$ there are 
only finitely many translational equivalence classes of patches
$P\subset T\in\Omega$ with diameter smaller than $R$. A single tiling $T$
has FLC if $\{T\}$ has FLC. Finite local complexity of tilings will be
a standing assumption in this article and we won't repeat it.

We say that a collection $\Omega$ of tilings of $\R^n$ constitutes an 
  $n$-dimensional tiling space if $\Omega$ has FLC, is closed under
translation, and
is compact in the tiling metric $d$. In this metric two tilings are
close if a small translate of one agrees with the other in a large
neighborhood of the origin. The main example of a tiling space is the
hull of an FLC tiling $T$   
$$\Omega_T=\{T': T' \mbox{ is a tiling of $\R^n$ and every patch of
  $T'$ is a translate of a patch of $T$}\}.$$  
If the translation action on $\Omega$ is free (i.e., $T-v=T\Rightarrow
v=0$), $\Omega$ is said to be non-periodic.
% and $T$ is called non-periodic if its hull is non-periodic.

Of particular interest for us are repetitive tilings which have the
property that for each finite patch $P$ of $T$ the set of occurrences of
translates of $P$ in $T$ is relatively dense. 
If $\Omega$ is repetitive, then the action of $\R^n$ on $\Omega$ by
translation is minimal. 

Another property which we will require occasionally is the existence of
frequencies of patches in a tiling. The frequency of a patch $P$
(up to translation) in $T$ is the density of the set of occurrences of
translates of $P$ in $T$, and being able to define this properly, independent of the limiting procedure, is equivalent to the
unique ergodicity of the dynamical system $(\Omega,\R^n)$. We denote,
then, the unique ergodic measure by $\mu$. 

Let $p$ be a puncture map; that is, $p$ assigns to each tile $\tau$ a point $p(\tau)\in
\tau$ so that $p(\tau+v)=p(\tau)$. 
If a tiling $T$ has FLC then 
the set of its punctures $p(T)=\{p(\tau):\tau\in T\}$ is a Delone set,
i.e.,\ a subset of $\R^n$ which is  uniformly discrete and
relatively dense. The puncture map $p$ defines a discrete hull 
%$\Xi$ in the (continuous) hull $\Omega_T$ of a tiling $T$, namely 
$\Xi = \{T'\in\Omega_T:0\in p(T')\}$. $\Xi$ is also refered to as the
canonical transversal as it is transversal in $\Omega_T$ 
to the $\R^n$-action reducing it to the so-called tiling groupoid
$\G=\{(\omega,t)\in \Xi\times\R^n: \omega-t\in \Xi\}$ with
multiplication
$(\omega,t)(\omega',t') = (\omega,t+t')$ provided $\omega' = \omega-t$. 

The definitions we have made for tilings all have analogs for Delone
sets and whether we deal with tilings or Delone sets is mainly a
matter of convenience. One could, for instance,  
represent a tiling $T$ by the Delone set of its punctures, or a Delone
set by its Voronoi tiling and the
topological dynamical systems $(\Omega,\R^n)$ are unchanged. Whereas substitutions are usually
and more intuitively presented by tilings, the projection method 
produces Delone sets which are often referred to as projection
patterns (or, under more general circumstances, model sets).

\subsection{Substitution tilings}
Suppose that $\mathcal{A}=\{\rho_1,\ldots,\rho_k\}$ is a set of
translationally inequivalent tiles (called prototiles ) in $\R^n$ and $\Lambda$ is an expanding linear isomorphism of $\R^n$, that is, all eigenvalues of $\Lambda$ have modulus strictly greater than $1$. 
A substitution on $\mathcal{A}$ with expansion $\Lambda$ is a function $\Phi:\mathcal{A}\to\{P:P$ is a patch in $\R^n\}$
with the properties that, for each $i\in\{1,\ldots,k\}$, every tile in
$\Phi(\rho_i)$ is a translate of an element of $\mathcal{A}$, and
$\Phi(\rho_i)$ covers the same set as $\Lambda(\rho_i)$. Such a substitution
naturally extends to patches whose elements are translates of
prototiles by $\Phi(\{\rho_{i(j)}+v_j:j\in J\}):=\cup_{j\in
  J}(\Phi(\rho_{i(j)})+\Lambda v_j)$. A patch $P$ is allowed for
$\Phi$ if there is an $m\ge1$, an $i\in\{1,\ldots,k\}$, and a
$v\in\R^n$, with $P\subset \Phi^m(\rho_i)-v$. The substitution tiling
space associated with  
$\Phi$ is the collection $\OP:=\{T:T$ is a tiling of $\R^n$ and every
finite patch in $T$ is allowed for $\Phi\}$. Clearly, translation
preserves allowed patches, so $\R^n$ acts on $\OP$ by translation.   
We say that a substitution $\Phi$ is FLC or non-periodic if its corresponding 
tiling space $\Omega_\Phi$ is FLC or non-periodic.

The substitution $\Phi$ is primitive if for each pair $\rho_i,\rho_j$
of prototiles there is a $k\in\N$ so that a translate of $\rho_i$
occurs in $\Phi^k(\rho_j)$.  If $\Phi$ is primitive then $\OP$ is
repetitive. 

If $\Phi$ is primitive, non-periodic and FLC then $\OP$ is
compact in the tiling metric, $\Phi:\OP\to\OP$ is a homeomorphism, and
the translation action on $\OP$ is minimal and uniquely ergodic 
\cite{AP,S3}. In particular, $\OP=\Omega_T$ for
any $T\in\OP$. 
All substitutions will be assumed to be primitive, non-periodic and FLC.

\subsection{Almost canonical projection patterns}
We describe here almost canonical projection patterns without going
into details which the reader may find in \cite{FHK}.

Consider a regular lattice  $\Gamma\subset \R^n\times \R^{\np}$ such
that $\R^n$ is in irrational position w.r.t.\ $\Gamma$, and a window
$K$ which is a compact polyhedron.
Let $\pi:\R^n\times \R^\np\to \R^n$ be the projection onto the first factor and
$\pi^\perp:\R^n\times \R^\np\to \R^\np$ be the projection onto the
second factor. 
Define the set $S$ of {\em singular points} in $\R^\np$ by 
$$ S :=\bigcup_{\gamma\in\Gamma} \partial K - \pi^\perp(\gamma)$$
%= \R^n\times\{e\} + \Gamma + \{0\}\times\partial K$$ 
%and denote by $N\!S$ its complement 
where  $\partial K$ denotes the boundary of $K$.
We assume that
\begin{itemize}
\item  the restrictions of $\pi^\|$ and $\pi^\perp$ to
$\Gamma$ are one to one,
\item  the restrictions of $\pi^\|$ and $\pi^\perp$ to
$\Gamma$ have dense image,
\item  there exists a finite set of affine hyperplanes $\{W_i\}_{i\in
    I}$ of codimension $1$ in $R^\np$ such that $S$ may be
  alternatively described as 
$$S = \bigcup_{i\in I}\bigcup_{\gamma\in\Gamma} W_i - \pi^\perp(\gamma).$$
\end{itemize}
We call the hyperplanes $W_i - \pi^\perp(\gamma)$, $i\in
I,\gamma\in\Gamma$ {\em cut-planes}.
By the second assumption $S$ is a dense subset of $\R^\np$ but of zero
Lebesgue measure. The last assumption means that, given a face $f$ of
$K$, the union of all $\pi^\perp(\Gamma)$-translates of $f$ contains the
affine hyperplane spanned by $f$; in particular 
the faces of $K$ have rational orientation w.r.t\ $\pi^\perp(\Gamma)$ and
the stabilizer $\{\gamma\in\Gamma:W_i-\pi^\perp(\gamma) = W_i\}$ of an affine hyperplane $W_i$ must have at least rank $\np-1$. 

We also assume (for simplicity) that $0$ is
not a singular point. Then the set
$$P_K:=\{\pi^\|(\gamma):\gamma\in\Gamma, \pi^\perp(\gamma)\in K\}$$ 
is a repetitive Delone set, called the projection pattern with window
$K$. With the above rather restrictive assumptions made on the window
$K$ the projection pattern is called {\em almost canonical}. There
are standard ways to turn $P_K$ into a tiling which is mutually
locally derivable from $P_K$,
for instance the dual of the Voronoi tiling defined by $P_K$ will do it.

\subsection{Tiling cohomology and the order structure on the top degree}\label{order
  structure}
  We are interested in the cohomology of a tiling (or pattern)
  $T$. This cohomology can be defined in various equivalent ways, for
  instance as the Cech cohomology $H(\Omega)$ of the hull $\Omega$ of
  $T$ or as (continuous cocycle) cohomology $H(\G)$ of the tiling
  groupoid $\G$ (after \cite{Renault}). 
%Indeed, having fixed a puncture map $p$ we obtain a discrete hull or
%canonical transversal $\Xi$ as the set $\Xi = \{\omega\in \Omega:
%0\in p(\omega)\}$ which, as the name says, is transversal to the
%$\R^n$-action reducing it to the groupoid $\G=\{(\omega,t)\in
%\Xi\times\R^n: \omega-t\in \Xi\}$. 
The equivalence between the two formulations of tiling cohomology can either be seen by realizing that $\Omega$ is a classifying space for the groupoid, or by a further reduction:
From the work of Sadun-Williams \cite{SadunWilliams03}
we know that we can deform the tiling into a tiling by decorated cubes without changing the topological structure of the hull (the hull of the tiling by cubes is homeomorphic to the original one). It follows then that the tiling groupoid of the tiling by cubes is a transformation groupoid $\Xi'\times \Z^n$ which is continuously similar to $\G$ \cite{Renault,FHK}. Here $\Xi'$ is the canonical transversal of the tiling by cubes. Like $\Xi$ it is a compact totally disconnected space.
It follows then quickly that $H(\G)\cong H( \Xi'\times \Z^n)$ and, by definition of the groupoid cohomology, $H( \Xi'\times \Z^n)$  is the dynamical cohomology 
$H(\Z^n,C(\Xi',\Z))$ which is the cohomology of the group $\Z^n$ with
coefficients in the integer valued continuous (and hence locally
constant) functions. Now what the construction of \cite{SadunWilliams03} actually
does on the level of spaces is to realise $\Omega$ as a fiber bundle
over an $n$-torus whose typical fibre is $\Xi'$ such that the above $\Z^n$
action corresponds to the holonomy action induced by the fundamental group of
the torus. Another way of saying this is that $\Omega$ is the mapping
torus of that $\Z^n$ action.  
It follows (as is seen for instance from the Serre spectral sequence) that $H(\Z^n,C(\Xi',\Z))$
is isomorphic to $H(\Omega)$. 

In the highest non-vanishing degree, namely
in degree $n$, $H^n(\G)$ is the group of co-invariants, 
$$ H^n(\G)\cong C(\Xi,\Z)/B $$
where $B$ is the subgroup generated by differences of indicator functions of the
form $1_U-1_{U-t}$, $U\subset \Xi$ a clopen subset and $t\in\R^n$ such that $U-t\subset \Xi$.

The group of co-invariants carries a natural order: an element is positive whenever it is represented by a positive function in $C(\Xi,\Z) $.  Moreover, the order structure is preserved under groupoid isomorphism, and hence the ordered group of co-invariants is a topological invariant for the tiling system. 

Let us assume that the tiling system is strictly ergodic. Hence the $\R^n$ action on $\Omega$ as well as the groupoid action on $\Xi$ are minimal and uniquely ergodic. Let $\nu$ be the unique ergodic measure on $\Xi$. Then $1_U\mapsto \nu(U)$ factors through $C(\Xi,\Z)/B $ and hence, combined with the isomorphism
$ H^n(\Omega)\cong C(\Xi,\Z)/B $, defines a group homomorphism
$$\tau : H^n(\Omega) \to \R.$$
Now the order can be described by saying that  $x\in H^n(\Omega)$ is positive whenever $\tau(x)\geq 0$. We say that an element $x$ is infinitesimal if it is neither strictly positive nor strictly negative, which is hence the case if and only if $\tau(x)=0$. We denote the infinitesimal elements by $\Inf(\Omega)$.

It is well known that a basis of the topology of $\Xi$ is given by the acceptance domains on patches, that is, by subsets containing all tilings which have a given patch at the origin.
It follows from this (and the unique ergodicity) that $\nu(U_P) $ is the frequency of occurrence of the patch $P$ in $T$ where $U_P$ is the acceptance domain of $P$. Let us denote by   $\freq(\Omega)$ the subgroup of $\R$ generated by the frequencies of finite patches in $T$. 
We thus have an exact sequence 
\begin{equation}\label{exact sequence 1}
 0 \to \Inf(\Omega) \to H^n(\Omega)\stackrel{\tau}\to \freq(\Omega) \to 0 
 \end{equation}
which splits if $\freq(\Omega)$ is finitely generated.

\subsection{The maximal equicontinuous factor and eigenvalues}
Let $(X,G)$ be a minimal dynamical system with compact Hausdorff space $X$ and abelian group $G$ action. There is a maximal equicontinuous factor $(\Xmax,G)$ of that system -- unique up to conjugacy -- and this factor can be obtained from the continuous eigenvalues of the action. In fact,
a {\em continuous eigenfunction} of a dynamical system $(X,G)$ is a non-zero function
$f\in C(X)$ for which there exists a (continuous) character $\chi\in
\hat G$ such that  
\begin{equation}\nonumber
f(t\cdot x) = \chi(t) f(x).
\end{equation} 
$\chi$ is called the {\em eigenvalue} of $f$. To stress that this eigenvalue is an eigenvalue to a continuous function (as opposed to an $L^2$-function) one also calls it a {\em continuous} eigenvalue. But we will here consider only eigenvalues to continuous eigenfunctions and so drop that adjective.

The set of all eigenvalues
$\mathcal E$ forms a subgroup of the Pontryagin dual $\hat G$ of $G$.  
We consider $\mathcal E$ with discrete topology. Then the Pontryagin dual $\hat\E$ of $\E$ is a compact abelian group and
the maximal equicontinuous factor can be identified with it, $\Xmax\cong \hat\E$.
The factor map $\pi:X \to \hat{\mathcal E}$ is then given by $x\mapsto j_x$, where $j_x:\mathcal E\to \T^1$ is defined by $j_x(\chi) = f_\chi(x)$, and the $G$-action on 
$\varphi\in\hat{\mathcal E}$ is given by
%$t\cdot j_x= j_{t\cdot x}$, i.e.,\ $(t\cdot j_x)(\chi) = \chi(t)j_x(\chi)$.  
 $(t\cdot \varphi)(\chi) = \chi(t)\varphi(\chi)$.  Here  
 $f_\chi$ is the eigenfunction to eigenvalue $\chi$ normalized in such a way that $f_\chi(x_0)=1$ 
 where  $x_0\in X$ is some chosen point used to normalize all eigenfunctions.

% In particular, $\Phi^*$ induces  an action $\Phimax$ on the maximal
% equicontinuous factor $\Xmax\cong \hat{\mathcal E}$ which is
% equivariant w.r.t.\ the $G$ action, and $\E$ is invariant under the
% dual isomorphism $\hat\Lambda:\hat G\to \hat G$.  
%\begin{lemma}\label{lem-ergodic}
%If $G=\R^n$ and $\Lambda$ does not have any eigenvalues on the unit circle then $\Phi_{max}$ is ergodic with respect to $\eta$.
%\end{lemma}

%%%%%%%%%%%%

\section{The factor map and cohomology}\label{The factor map and cohomology}

We are interested in the map in cohomology induced by the factor map $\pi$:
$$ \pi^*:H^k(\hat\E)\to H^k(X).$$
(If nothing else is said this means integer valued \v{C}ech
cohomology.) 
In particular, we consider the kernel and cokernel of $\pi^*$.
%In particular we wish to the exact sequence
%\begin{equation}\label{exact sequence}
%0\to H^k(\hat\E)\stackrel{\pi^*}\longrightarrow H^k(\Omega) \to \coker\pi^* \to 0.
% \end{equation}
The situation is extremely simple in degree $0$: 
$X$ and $\Xmax$ are connected and so their cohomology in degree $0$ is
$\Z$ and $\pi^*$ an isomorphism in that degree. The situation is very
complicated in degrees larger than one, and we will only be able
to say something for almost canonical projection patterns. This will
be done in the next section.   
In this section we will concentrate on degree $1$ which is important for
deformation theory
\cite{SadunWilliams03,ClarkSadun06,Kellendonk08,Boulmezaoud10} and for
the homological version of the Pisot conjecture \cite{BBJS, BG}. 

\subsection{The cohomology of the maximal equicontinuous factor}
Note that the group $\E$ of eigenvalues is at most countable. This follows from the fact that $L^2(X,\mu)$ is separable (for any ergodic invariant probability measure $\mu$) and eigenfunctions to distinct eigenvalues are orthogonal in that Hilbert space.  

We suppose that $\E$ is torsion free which is certainly the case if $\hat G$ is torsion free, in particular thus if $G=\R^n$. 
As an abelian group $\E$ is a $\Z$-module and we may consider the exterior algebra $\Lambda\E$, which is a graded ring. 

As is well-known, $H^1(S^1)$ is a free abelian group of rank one. We pick a generator $\gamma\in H^1(S^1)$ (which amounts to choosing an orientation). Given an element of $\chi\in\E$, which we may view as a character on $\hat\E$, $\chi:\hat\E\to S^1$,
$\chi^*(\gamma)$ defines an element in $H^1(\hat \E)$ and thus a group homomorphism $\jmath:\E\to H^1(\hat \E)$, $\jmath(\chi) = \chi^*(\gamma)$. 
\begin{theorem} 
$\Lambda\jmath : \Lambda\E\to H(\hat\E)$ is a graded ring isomorphism.
\end{theorem}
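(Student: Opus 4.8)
The plan is to prove this by first establishing the statement at the level of degree-one cohomology, namely that $\jmath : \E \to H^1(\hat\E)$ is an isomorphism of abelian groups, and then bootstrapping to the full graded ring statement via the structure of $\hat\E$ as a torus or (inverse limit) solenoid. The key structural fact I would invoke is that $\hat\E$ is the Pontryagin dual of the countable torsion-free abelian group $\E$. When $\E$ is finitely generated, $\E \cong \Z^m$ and $\hat\E \cong \T^m$, for which the result is the classical computation that $H^*(\T^m) \cong \Lambda(H^1(\T^m))$ with $H^1(\T^m) \cong \Z^m$, and the generators are exactly the pullbacks $\chi^*(\gamma)$ of the coordinate characters. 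In general $\E$ is a countable increasing union (direct limit) of finitely generated free subgroups $\E_\alpha$, so $\hat\E = \ilim \hat{\E_\alpha}$ is an inverse limit of tori.

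First I would verify that $\jmath$ is well-defined and a homomorphism: since $\chi \mapsto \chi^*$ is contravariant and characters multiply, $(\chi_1\chi_2)^* = \chi_2^* \circ \chi_1^*$ restricted to the one-dimensional target behaves additively on $H^1$, giving $\jmath(\chi_1\chi_2) = \jmath(\chi_1) + \jmath(\chi_2)$; this is the standard identification $H^1(\hat\E) \cong \mathrm{Hom}(\hat\E, \T) \cong \E$, where the last isomorphism is Pontryagin duality together with the fact that $[\hat\E, \T] = H^1(\hat\E)$ for a compact group. I would then argue that $\jmath$ is precisely the inverse of this duality identification, hence an isomorphism in degree one. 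The next step is to promote this: the exterior algebra $\Lambda\E$ is generated in degree one, and $\Lambda\jmath$ is the unique graded ring map extending $\jmath$, so it suffices to show $H(\hat\E)$ is generated as a ring by $H^1(\hat\E)$ and that there are no unexpected relations.

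For the finitely generated case this is the Künneth/exterior-algebra computation for the torus, and I would check explicitly that the cup products of the degree-one generators are free and exhaust the cohomology, matching $\Lambda \Z^m$ rank by rank. For the general solenoid case, I would pass to the limit: \v{C}ech cohomology sends inverse limits of compact spaces to direct limits of cohomology groups, so $H(\hat\E) \cong \dlim H(\hat{\E_\alpha}) \cong \dlim \Lambda\E_\alpha$. Since the exterior algebra functor commutes with direct limits of torsion-free groups (as $\Lambda\E = \dlim \Lambda\E_\alpha$), the degree-one isomorphisms assemble into $\Lambda\jmath$ and the limit is a ring isomorphism. The torsion-freeness hypothesis on $\E$ is essential here precisely so that the finitely generated pieces are free and the exterior algebra has the expected flat structure.

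The main obstacle I anticipate is the compatibility of the \emph{ring} (cup-product) structure with the direct limit, rather than just the additive group structure in each degree. One must confirm that the connecting maps in the direct system $H(\hat{\E_\alpha}) \to H(\hat{\E_\beta})$ are ring homomorphisms and that they match the natural inclusions $\Lambda\E_\alpha \hookrightarrow \Lambda\E_\beta$ under $\Lambda\jmath$; this requires a careful naturality check that the generator $\gamma \in H^1(S^1)$ is tracked consistently and that cup products of pullbacks $\chi^*(\gamma)$ behave functorially under the dual maps of the inverse system. Once that naturality is in place, the isomorphism in each finitely generated stage together with the limit argument yields the full graded ring isomorphism.
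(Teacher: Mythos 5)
Your proposal is correct and follows essentially the same route as the paper's proof: write $\E$ as a direct limit of finitely generated free subgroups, dualize to present $\hat\E$ as an inverse limit of tori, invoke the classical computation $H(\T^m)\cong\Lambda\Z^m$ at each finite stage, and pass to the limit using continuity of \Cech cohomology together with the fact that $\Lambda$ commutes with direct limits. The naturality and ring-structure compatibility issues you flag are real but routine, and the paper handles them implicitly in exactly the same framework.
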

\begin{proof}
As $\E$ is countable and torsion free we can write it as $\E = \dlim(\E_n,i_n^{n+1})$ where $\E_n$ is free abelian of finite rank and $i_n^{n+1}:\E_n\to \E_{n+1}$ an injective group homomorphism\footnote{$\E = \{g_n,n\in\N\}$ and we may take $\E_n$ to be the group generated by $\{g_1,\cdots,g_n\}$.}. We denote $i_n:\E_n\to \E$ the corresponding group inclusion. It follows that
$\hat\E = \ilim (\hat\E_n,\hat i_n^{n+1})$. Now $\hat\E_n$ is a torus whose dimension equals the rank of $\E_n$ and so its cohomology is generated as a ring by its degree $1$ elements which, in turn, are the elements of the form $\jmath(\chi)$, 
$\chi \in \E_n$. This shows that 
$H(\hat\E_n) \cong \Lambda\E_n$ with ring-isomorphism given by $\Lambda\jmath: \Lambda\E_n\to H(\hat\E_n) $. Hence
$H(\hat\E) = \dlim H(\E_n,{i_n^{n+1}}^*) \cong \dlim (\Lambda\E_n, \Lambda i_n^{n+1}) = \Lambda\E$.
\end{proof}

\subsection{Injectivity of $\pi^*$ in degree one.}

Let $[X,S^1]$ denote the set of homotopy classes of maps from $X$ to the circle $S^1:=\{z\in\mathbb C:|z|=1\}$. This is an abelian group (known as the {\em Bruschlinsky group} of $X$) under the operation $[f]+[g]:=[fg]$ and the map $[f]\mapsto f^*(\gamma)$ is a natural isomorphism between $[X,S^1]$ and ${H}^1(X)$ (see, for example, \cite{PT}). 
%for any paracompact Hausdorff space $X$. 
%{\em I've seen a reference to Peter Huber for this. (include)!} 
If we take $X = \hat\E$, the maximal equicontinuous factor, then $H^1(\hat\E) = \E$ and the isomorphism 
$\E \cong [\hat \E,S^1]$ is given by $\chi \mapsto [\chi]$, as is easily seen.
The naturality of the isomorphism implies that 
\[ \begin{array}{ccc}
H^1(\hat\E)&\stackrel{\pi^*}\longrightarrow & H^1(X) \\
\| & {} & \| \\
{}[\hat\E,S^1] 
& \stackrel{\pi^*}\longrightarrow & [X,S^1]
\end{array}\]
commutes.

We now suppose that $G=\R^n$ and $X=\Omega$ is a tiling space. Then $\hat{\R^n}$ is isomorphic to ${\R^n}^*$, the dual of $\R^n$ as a vector space, the map 
${\R^n}^*\ni \beta \mapsto e^{2\pi\imath\beta} \in \hat{\R^n}$ providing a group isomorphism. We denote $E = \{\beta:e^{2\pi\imath\beta} \in \E\}$, calling it also the group of eigenvalues,
let $\imath:E\hookrightarrow {\R^n}^*$ be the inclusion, and $\theta:E\to [\Omega,S^1]$ be the composition $\theta(\beta) = \pi^*([e^{2\pi\imath\beta}])$. If $f_\beta$ is an eigenfunction to eigenvalue $\beta$, normalized so that its modulus is everywhere $1$, 
then $\theta(\beta) = [f_\beta]$. Indeed, by minimality any two
eigenfunctions differ by a multiplicative constant and hence are
homotopic.

The Lie-algebra of $G=\R^n$ is $\R^n$. 
Let $H^k(\R^n,C^\infty(\Omega,\R))$ be the Lie-algebra cohomology of $\R^n=Lie(G)$ with values in 
$C^\infty(\Omega,\R)$ which are continuous functions that are smooth w.r.t.\ the derivative $d$ defined by the Lie-algebra action. Since any continuous function on $\Omega$ can be approximated in the sup norm by a smooth function we can define a group homomorphism
$$\psi :[\Omega,S^1] \to H^1(\R^n,C^\infty(\Omega,\R)),\quad  \psi([f]) = \frac{1}{2\pi\imath} f^{-1}df $$
using a smooth representative. 
Given an ergodic invariant probability measure $\mu$ on $\Omega$ we can define the homomorphism
$$ C_\mu:H^1(\R^n,C^\infty(\Omega,\R))\to {\R^n}^*,\quad C_\mu( \alpha) = \int_\Omega \alpha(\omega) d\mu(\omega).$$
The composition $\tau=\C_\mu\circ \psi$ is the degree $1$ part of the Ruelle-Sullivan map of \cite{KellendonkPutnam06}. $\tau([f])(v)$ is also known as the Schwartzman
winding number of $f$ with respect to the $\R$-action $T\mapsto T-tv$ on $\Omega$.
\begin{lemma}\label{lem-2}
$\tau\circ\theta:E\to {\R^n}^*$ is given by $\tau\circ\theta=\imath$.
%:\E \to {\R^n}^*$ is given by the logarithm, i.e.\
%$\tau(\theta(\chi)) = \beta$ where $\exp{2\pi\imath \beta}=\chi$.  
\end{lemma}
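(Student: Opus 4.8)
The plan is to exploit the fact that $\theta(\beta)$ already comes equipped with a canonical smooth representative, namely the normalized eigenfunction $f_\beta$, so that no approximation step is needed when evaluating $\psi$. First I would record that $f_\beta$ is smooth in the flow directions: since $f_\beta(t\cdot x)=e^{2\pi\imath\beta(t)}f_\beta(x)$, the function $t\mapsto f_\beta(t\cdot x)$ is visibly $C^\infty$ for every $x$, so $f_\beta\in C^\infty(\Omega)$ in the sense required by the Lie-algebra action and may serve directly as the smooth representative in the definition of $\psi([f_\beta])$.

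The heart of the argument is then a one-line computation of the logarithmic derivative. For $v\in\R^n$, differentiating the eigenvalue relation in the flow direction $v$ gives $v\cdot f_\beta=\frac{d}{dt}\big|_{t=0}f_\beta(tv\cdot x)=2\pi\imath\,\beta(v)f_\beta$. Since $f_\beta$ has modulus one everywhere we have $f_\beta^{-1}=\overline{f_\beta}$, and hence
$$ \psi([f_\beta])(v)=\frac{1}{2\pi\imath}f_\beta^{-1}(v\cdot f_\beta)=\beta(v)\,|f_\beta|^2=\beta(v), $$
a \emph{constant} real-valued function on $\Omega$.

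Finally I would apply $C_\mu$. Because $\psi([f_\beta])(v)$ is the constant $\beta(v)$ and $\mu$ is a probability measure,
$$ (\tau\circ\theta)(\beta)(v)=C_\mu(\psi([f_\beta]))(v)=\int_\Omega\beta(v)\,d\mu=\beta(v). $$
As $v$ was arbitrary this says $(\tau\circ\theta)(\beta)=\beta=\imath(\beta)$, which is the claim.

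There is really no serious obstacle here: once the normalized eigenfunction is recognized as a legitimate smooth representative, the computation is forced. The only points requiring care are bookkeeping ones — matching the sign convention of the $\R^n$-action used to define the derivative $d$ with the convention $f(t\cdot x)=\chi(t)f(x)$ defining the eigenvalue, and checking that the modulus-one normalization makes $f_\beta^{-1}df_\beta$ purely imaginary so that $\psi([f_\beta])$ indeed lands in the real-valued cochains $C^\infty(\Omega,\R)$. The independence of $\psi$ from the choice of smooth representative is guaranteed by its definition, so electing to use $f_\beta$ itself is legitimate.
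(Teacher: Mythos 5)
Your proof is correct and is essentially the paper's own argument: the paper's one-line proof is precisely the observation that $f_\beta^{-1}df_\beta = 2\pi\imath\,\beta$ is a constant function, so that $C_\mu(\psi(\theta(\beta)))=\beta$. Your write-up merely makes explicit what the paper leaves implicit — that the normalized eigenfunction $f_\beta$ is a legitimate smooth representative of $\theta(\beta)$ and that its logarithmic derivative is purely imaginary — which is a fine elaboration, not a different route.
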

\begin{proof} Since $f_\beta^{-1}df_\beta = 2\pi\imath \beta$, a constant function, we have 
$C_\mu(\Psi(\theta(\beta)))=\beta$.
\end{proof}
\begin{cor}
$\pi^*$ is injective in degree $1$.
\end{cor}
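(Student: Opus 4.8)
The plan is to deduce the corollary directly from Lemma~\ref{lem-2} together with the identifications already set up. The key observation is that injectivity of a composition forces injectivity of the inner map. First I would recall the chain of maps constructed above: the factor map induces $\pi^*:H^1(\hat\E)\to H^1(\Omega)$, which under the identifications $H^1(\hat\E)\cong\E\cong E$ and $H^1(\Omega)\cong[\Omega,S^1]$ corresponds to the map $\theta:E\to[\Omega,S^1]$, $\theta(\beta)=\pi^*([e^{2\pi\imath\beta}])$. Thus to show $\pi^*$ is injective in degree one it suffices to show that $\theta$ is injective.

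Next I would invoke Lemma~\ref{lem-2}, which asserts $\tau\circ\theta=\imath$, where $\imath:E\hookrightarrow{\R^n}^*$ is the inclusion. Since $\imath$ is injective (it is an inclusion), and the composition $\tau\circ\theta$ equals $\imath$, the map $\theta$ must itself be injective: if $\theta(\beta)=\theta(\beta')$ then $\imath(\beta)=\tau(\theta(\beta))=\tau(\theta(\beta'))=\imath(\beta')$, whence $\beta=\beta'$. Equivalently, $\theta$ has a left inverse, namely $\tau$ followed by the inverse of $\imath$ on its image, so $\theta$ is a split monomorphism.

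Finally I would transport this conclusion back through the commuting square relating $H^1(\hat\E)\stackrel{\pi^*}\to H^1(\Omega)$ to $[\hat\E,S^1]\stackrel{\pi^*}\to[\Omega,S^1]$, using that the vertical isomorphisms $H^1(\hat\E)\cong[\hat\E,S^1]$ and $H^1(\Omega)\cong[\Omega,S^1]$ (the Bruschlinsky isomorphisms) are natural, so that injectivity of $\theta$ is equivalent to injectivity of $\pi^*$ on $H^1(\hat\E)$. This is really just bookkeeping once the square commutes.

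I do not expect any serious obstacle here, since all the substantive work has been absorbed into Lemma~\ref{lem-2} and the identification $H^1(\hat\E)\cong\E$ from the preceding theorem. The only point requiring a little care is making sure the identification of $\pi^*$ with $\theta$ is faithful to the cohomological $\pi^*$ rather than merely the set-theoretic factor map; this is exactly what the naturality of the Bruschlinsky isomorphism guarantees, and it is precisely the content of the commuting square displayed just before Lemma~\ref{lem-2}. Granting that, the corollary is an immediate formal consequence of the relation $\tau\circ\theta=\imath$ and the injectivity of $\imath$.
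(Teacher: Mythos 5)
Your proof is correct and is essentially the paper's own argument: the paper likewise deduces the corollary from Lemma~\ref{lem-2}, observing that $\tau\circ\theta=\imath$ is injective and that $\theta$ is, under the Bruschlinsky identifications, exactly the degree one part of $\pi^*$, so injectivity of the composition forces injectivity of $\pi^*$.

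One aside in your write-up is false, although it does not affect the corollary. You claim that $\theta$ is a \emph{split} monomorphism, with left inverse ``$\tau$ followed by the inverse of $\imath$ on its image.'' That composite is only defined on the subgroup $\tau^{-1}(\imath(E))\subset H^1(\Omega)$: nothing forces $\tau$ to carry all of $H^1(\Omega)$ into $\imath(E)\subset{\R^n}^*$, and in general it does not. Indeed, in Example~\ref{ex-3l} of the paper ($a\mapsto abb$, $b\mapsto aaa$) one has $\imath(E)=\Z[1/3]$ while $\im\tau=\frac15\Z[1/3]$, and the sequence $0\to E\stackrel{\theta}\longrightarrow H^1(\Omega)\to\coker\theta\to 0$ does \emph{not} split; that is, $\theta$ admits no left inverse defined on all of $H^1(\Omega)$. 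The non-splitting of this sequence is one of the main points of the paper, so the ``split mono'' strengthening should be deleted: injectivity of $\theta$ needs only that the composition $\tau\circ\theta$ be injective, which is what Lemma~\ref{lem-2} provides.
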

\begin{proof}
$\tau\circ\theta$ is injective and factors through the degree one part of $\pi^*$.
\end{proof}
%We make a couple of remarks:
%\begin{enumerate}
%\item
\noindent
{\bf Remark:} Lemma~\ref{lem-2}
is actually the degree one part of a more general result which can be obtained with the help of the full Ruelle-Sullivan map $\tau: H(\Omega) \to \Lambda{\R^n}^*$. The composition
$\tau\circ\pi^*\circ \Lambda\jmath : \Lambda E \to \Lambda {\R^n}^*$ is $\Lambda\imath$. In particular $\mbox{\rm im}\tau\circ\pi^*=\Lambda\imath(E)$.
The proof of this statements is a slight generalization of the one
given in \cite{KellendonkPutnam06}[Thm.~13]. In degree $n$ we may
identify $\Lambda^n{\R^n}^*\cong\R$ and obtain the same map as above
(justifying this way the double use of $\tau$ in the notation).
\medskip

%\item Injectivity of $\tau$ in degree $1$ does not imply injectivity in higher degrees, the last phrase of the proof of Thm.~13 in \cite{KP} is incorrect. 
%\item Even if $\tau$ is not injective in higher degrees, $\pi^*$ might still be injective. There exist, however, examples of canonical projection tilings for which $\pi^*$ is not injective.
%\end{enumerate}

\subsection{The cokernel of $\pi^*$ in degree one.} We start with the remark that 
$H^1(\Omega)$ is torsion free, as follows from the universal coefficient theorem. 
%the first part of the proof given in \cite{FH} is correct and shows precisely this. 
In higher degrees, $H^k(\Omega)$ may contain torsion.
\renewcommand{\T}{S^1}
\begin{lemma}\label{Kras}(Krasinkiewicz \cite{Kr})
Suppose that $f : \Omega \to \T$ is continuous and suppose that there are $0\ne k\in\Z $ and a continuous
$g:\Omega\to\T$ such that $[f]=k[g]\in[\Omega,\T]$. Then there is a continuous $\tilde{f}:\Omega\to\T$ so that $f = \tilde f^k$.
%$f=p_k\circ\tilde{f}$, where $p_k:\T\to\T$ is the $k$-fold cover $p_k(z):=z^k$.
\end{lemma}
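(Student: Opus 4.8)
The plan is to recognize the existence of a continuous $k$-th root of $f$ as a lifting problem for the $k$-fold covering map $p_k:S^1\to S^1$, $z\mapsto z^k$ (for any $k\ne 0$), and to solve it by transporting an obvious root along a homotopy. First I would unwind the hypothesis homotopically. In the Bruschlinsky group the operation is $[f_1]+[f_2]=[f_1f_2]$, so $k[g]=[g^k]$; hence $[f]=k[g]$ says exactly that $f$ and $g^k$ are homotopic. Fix a homotopy $G:\Omega\times[0,1]\to S^1$ with $G(\cdot,0)=g^k$ and $G(\cdot,1)=f$. Since $p_k\circ\tilde f=\tilde f^{\,k}$, producing $\tilde f$ with $\tilde f^{\,k}=f$ is the same as producing a lift of $f$ through $p_k$.

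The main step is to notice that $g$ is itself a tautological lift of the initial map $G(\cdot,0)=g^k$, because $p_k\circ g=g^k$, and then to carry this lift across $G$. As $p_k$ is a covering map it is a fibration, and covering maps satisfy the homotopy lifting property with respect to every space; applying this to the homotopy $G$ with prescribed initial lift $g$ yields a continuous $\tilde G:\Omega\times[0,1]\to S^1$ with $p_k\circ\tilde G=G$ and $\tilde G(\cdot,0)=g$. Setting $\tilde f:=\tilde G(\cdot,1)$ then gives $\tilde f^{\,k}=p_k\circ\tilde G(\cdot,1)=G(\cdot,1)=f$ on the nose, which is the desired root.

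The step requiring the most care, and the reason one cannot simply invoke the usual covering-space lifting criterion, is the appeal to the homotopy lifting property over the base $\Omega$. Tiling spaces are generally not locally connected (indeed $\Omega$ is a fibre bundle over a torus with totally disconnected fibre), so the criterion $f_*\pi_1(\Omega)\subseteq(p_k)_*\pi_1(S^1)$ is not available. What rescues the argument is that a covering map enjoys the homotopy lifting property for arbitrary test spaces: for each $\omega\in\Omega$ one lifts the path $t\mapsto G(\omega,t)$ uniquely starting at $g(\omega)$, and the continuity of the assembled $\tilde G$ follows from local triviality of $p_k$ together with compactness of $[0,1]$ (a Lebesgue-number argument), with no local-connectivity hypothesis on $\Omega$. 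This is precisely the input we take from Krasinkiewicz. Conceptually the hypothesis is sharp: the obstruction to a $k$-th root is the mod-$k$ reduction of $[f]\in H^1(\Omega;\Z)\cong[\Omega,S^1]$, and $[f]=k[g]$ is exactly the statement that this reduction vanishes.
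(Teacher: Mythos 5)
Your proof is correct and follows essentially the same route as the paper's: both recast the statement as a lifting problem for the covering $p_k(z)=z^k$, observe that $g$ lifts the initial map $g^k=p_k\circ g$ of the homotopy realizing $[f]=k[g]$, and invoke the homotopy lifting property of covering maps to transport this lift to the endpoint, yielding $\tilde f$ with $p_k\circ\tilde f=f$. Your additional remark that covering maps satisfy the homotopy lifting property with respect to \emph{arbitrary} spaces (so no local connectivity of $\Omega$ is needed) is a correct and worthwhile clarification of a point the paper passes over silently.
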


\begin{proof} Let $p_k:\T\to\T$ be the $k$-fold covering map $p_k(z):=z^k$.
By assumption we have $[f]=[p_k\circ g]$. Let $H:\Omega\times I\to\T$ be a homotopy from
$h:=p_k\circ g$ to $f$. 
Then $\tilde{h}:= g$ is a lift of $h$. Being a covering map, $p_k$ has the homotopy lifting property so there is a homotopy $\tilde{H}:\Omega\times I\to\T$ from $\tilde{h}$ to some function $\tilde{f}$ such that $\tilde{H}(\cdot,0)=\tilde{h}$ and $p_k\circ\tilde{H}=H$. It follows that $p_k\circ\tilde{f}=f$.\end{proof}

\begin{Theorem}\label{coker torsion free}
The cokernel of the homomorphism $\pi^*:H^1(\hat\E)\to{H}^1(\Omega)$ is torsion free.
\end{Theorem}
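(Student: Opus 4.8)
The plan is to carry out the whole argument in the Bruschlinsky picture $H^1(\Omega)\cong[\Omega,S^1]$ and $H^1(\hat\E)\cong\E$, under which the image of $\pi^*$ is exactly the set of classes of unimodular eigenfunctions $\{[f_\beta]:\beta\in E\}$. Torsion-freeness of $\coker\pi^*=H^1(\Omega)/\im\pi^*$ is, by definition, the divisibility statement: if $x\in H^1(\Omega)$ satisfies $kx\in\im\pi^*$ for some integer $k\neq 0$, then $x\in\im\pi^*$. So I would fix such an $x$, represent it as $x=[f]$ with $f:\Omega\to S^1$ continuous, and write $kx=[f_\beta]$ for some $\beta\in E$; thus $[f_\beta]=k[f]$ in $[\Omega,S^1]$.

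The first step extracts an honest $k$-th root of the eigenfunction. Applying Lemma \ref{Kras} with $f_\beta$ in the role of ``$f$'' and our $f$ in the role of ``$g$'' produces a continuous $\tilde h:\Omega\to S^1$ with $f_\beta=\tilde h^k$. The second, and main, step is to show that $\tilde h$ is itself an eigenfunction, of eigenvalue $\beta/k$. From $\tilde h(t\cdot\omega)^k=f_\beta(t\cdot\omega)=e^{2\pi\imath\beta(t)}f_\beta(\omega)=e^{2\pi\imath\beta(t)}\tilde h(\omega)^k$ one sees that, for each fixed $t$, the continuous map $\omega\mapsto\tilde h(t\cdot\omega)/\tilde h(\omega)$ takes values in the finite set of $k$-th roots of $e^{2\pi\imath\beta(t)}$; since $\Omega$ is connected it is therefore constant, say equal to $c(t)$. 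A short computation shows $c$ is a continuous character of $\R^n$ with $c^k=e^{2\pi\imath\beta}$, forcing $c=e^{2\pi\imath(\beta/k)}$. Hence $\tilde h$ is an eigenfunction and $\beta/k\in E$.

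With this in hand the conclusion is immediate. Since $\tilde h$ and $f_{\beta/k}$ are eigenfunctions with the same eigenvalue, they are homotopic by minimality, so $[\tilde h]=\theta(\beta/k)\in\im\pi^*$. On the other hand $k[\tilde h]=[\tilde h^k]=[f_\beta]=k[f]=kx$, and because $H^1(\Omega)\cong[\Omega,S^1]$ is torsion free this yields $[\tilde h]=x$. Therefore $x\in\im\pi^*$, which is what the divisibility statement requires.

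I expect the substantive point to be the passage from the purely topological root $\tilde h$ supplied by Lemma \ref{Kras} to the dynamical fact that $\tilde h$ is an eigenfunction. The connectedness of the hull $\Omega$ is exactly what makes the $k$-th root of the cocycle $\tilde h(t\cdot\omega)/\tilde h(\omega)$ independent of $\omega$, and this is where the geometry of the tiling space enters; once the eigenvalue $\beta/k$ is located in $E$, everything else is formal.
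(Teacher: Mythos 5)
Your proposal is correct and follows essentially the same route as the paper's proof: apply Krasinkiewicz's lemma (Lemma \ref{Kras}) to extract a continuous $k$-th root of the eigenfunction $f_\beta$, show that this root is itself an eigenfunction with eigenvalue $\beta/k$, and conclude using torsion-freeness of $H^1(\Omega)$. The only (harmless) variation is in the middle step: the paper fixes a base tiling $T_0$, uses connectedness of $\R^n$ to kill the root-of-unity ambiguity $u(x)$, and then invokes continuity and minimality to propagate the identity from the orbit of $T_0$ to all of $\Omega$, whereas you fix $t$, use connectedness of $\Omega$ to make the cocycle $\omega\mapsto\tilde h(t\cdot\omega)/\tilde h(\omega)$ constant, and then identify the resulting continuous character of $\R^n$ as $e^{2\pi\imath\beta/k}$.
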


\begin{proof}
The statement of the theorem is equivalent to saying that the
cokernel of $\theta:E\to{H}^1(\Omega)$ is torsion free.
Let $[f]\in[\Omega,\T]\cong{H}^1(\Omega)$, $k\in\N$, and $\beta\in E$ be such that $k[f]=\theta(\beta)=[f_{\beta}]$. By Lemma \ref{Kras} there is an $\tilde{f}_{\beta}:\Omega\to\T$ so that $f_{\beta}=p_k\circ\tilde{f}_{\beta}$. Then $k([\tilde{f}_{\beta}]-[f])=0$.
Since ${H}^1(\Omega)$ is torsion free, $[\tilde{f}_{\beta}]=[f]$. We claim that
$\tilde{f}_{\beta}$ is an eigenfunction with eigenvalue $\beta/k$.

By continuity of $\tilde f_\beta$ it is enough to verify the equation $\tilde{f}_{\beta}(T_0-x) = \exp{2\pi\imath\frac{\beta(x)}k} \tilde f_\beta(T_0)$ for some $T_0$.
We have $f_{\beta}(T_0-x)=\exp(2\pi\imath\beta(x))f_\beta(T_0)$ for all $x\in\R^n$, thus
$\tilde{f}_{\beta}^k(T_0-x)= \exp(2\pi\imath\beta(x))\tilde f_\beta^k(T_0)$. Taking the $k$th root we obtain $\tilde{f}_{\beta}(T_0-x)= u(x) \exp(2\pi\imath\frac{\beta(x)}k)\tilde f_\beta(T_0)$ where $u(x)$ is a $k$th  root of unity. Continuity of
$\tilde{f}_{\beta}$ requires that $u(x)=1$. Hence $\tilde{f}_{\beta}$ is an eigenfunction with eigenvalue $\beta/k$. Hence $\theta(\beta/k)=[\tilde{f}_{\beta}]=[f]$ and $\coker \theta$ is torsion free.
\end{proof}
\begin{cor}\label{cor-6} If $\coker\pi^*$ is finitely generated then $H^1(\Omega)$
is isomorphic to the direct sum of  $E$ with  $\coker\pi^*$.
\end{cor}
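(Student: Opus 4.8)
The plan is to assemble the injectivity and torsion-freeness already proved into a single short exact sequence and then split it. By the injectivity of $\pi^*$ in degree one established above, together with the definition of the cokernel, we obtain the exact sequence of abelian groups
$$0 \to E \stackrel{\theta}\longrightarrow H^1(\Omega) \longrightarrow \coker\pi^* \to 0,$$
where I use the identification of $\pi^*$ in degree one with $\theta$, so that $\coker\pi^*=\coker\theta = H^1(\Omega)/\theta(E)$, exactly as recorded at the start of the proof of Theorem~\ref{coker torsion free}. First I would write down this sequence and note that $\theta$ is injective, so that $\theta(E)\cong E$.

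The decisive step is to observe that the hypothesis promotes $\coker\pi^*$ to a free abelian group. Indeed, Theorem~\ref{coker torsion free} guarantees that $\coker\pi^*$ is torsion free, and a finitely generated torsion-free abelian group is, by the structure theorem for finitely generated abelian groups, free of finite rank. Since free abelian groups are projective, the surjection $H^1(\Omega)\to\coker\pi^*$ admits a section, and therefore the short exact sequence splits. Consequently $H^1(\Omega)\cong\theta(E)\oplus\coker\pi^*$, and because $\theta$ is injective this is isomorphic to $E\oplus\coker\pi^*$, which is the claim.

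I do not anticipate a genuine obstacle here: once the sequence is in place, the content is entirely the standard homological fact that an extension with projective quotient splits. The only point deserving care is the logical role of Theorem~\ref{coker torsion free}, which is precisely what upgrades \emph{finitely generated} to \emph{free abelian} and hence to \emph{projective}; without torsion-freeness the hypothesis of finite generation alone would not force the splitting.
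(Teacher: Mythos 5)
Your proposal is correct and is exactly the paper's argument, merely written out in full: the paper's one-line proof likewise invokes Theorem~\ref{coker torsion free} to conclude that the finitely generated, torsion-free group $\coker\pi^*$ is isomorphic to $\Z^l$, whence the short exact sequence splits. Your added care in identifying $\pi^*$ with $\theta$ and citing projectivity of free abelian groups just makes explicit what the paper leaves implicit.
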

\begin{proof} Under the assumption $\coker\pi^*\cong \Z^l$ for some finite $l$, as it is torsion free.  
\end{proof}
If $H^1(\Omega)$ is not finitely generated then it is not always  the direct sum of  $E$ with  $\coker\pi^*$ as Example~\ref{ex-3l} shows.

\section{Almost canonical projection patterns}\label{Almost canonical projection patterns}
To obtain a Cantor fiber bundle for almost canonical projection
tilings we do not actually use the approach of Sadun \& Williams via a deformation of
the tiling, but rather consider a variant of the ``rope dynamical
system'' of \cite{Kellendonk}, 
see \cite{FHK}. 
This way we obtain a different Cantor fibre bundle, whose fibre we
shall denote by $\Fi$. The conclusion that tiling cohomology can
be formulated as group cohomology of a $\Z^n$-action on $\Fi$ remains
valid and we have the benefit that the structure of $\Fi$ allows for a
calculation of the cohomology groups. $\Fi$ can be obtained
from the set of singular points $S$ 
by disconnecting $\R^\np$ along the cut-planes
$W_i-\pi^\perp(\gamma)$ %whose union is  $S$  
and moding out the action of a subgroup of
$\pi^\perp(\Gamma)$ of rank $\np$. This subgroup $\Z^\np$ should be a
direct summand, i.e.\ 
$\pi^\perp(\Gamma)=\Z^\np\oplus\Z^n$, and it should span $\R^\np$,
but it can otherwise be chosen arbitrarily. So $\Fi=F_c/\Z^\np$, where
$F_c$ is the so-called cut-up  space obtained by disconnecting
$\R^\np$, and the other summand $\Z^n$ yields the action on $\Fi$.   
We  refer the reader to \cite{FHK,GHK} for the precise definition of
the disconnecting procedure mentioning here only that it can be
obtained via an inverse limit:  For any finite collection of
cut-planes, disconnecting $\R^\np$ along these cut-planes means taking
out the cut-planes so that the remaining part of $\R^\np$ falls into
several connected components and then completing separately these
connected components to obtain a closed space. The inverse limit is
just geared to make that work for infinitely many cut-planes. 

To do the actual computation it is more convenient to work with homology. Using Poincar\'e
duality for group (co-) homology and the fact that $\Z^\np$ acts
freely on $F_c$ one obtains
$$ H^k(\Z^n,C(\Fi,\Z))\cong H_{n-k}(\Z^n,C(\Fi,\Z))\cong
H_{n-k}(\Gamma,C_\np)$$
where $C_\np$ is the $\Z$-module generated by indicator functions on
polyhedra whose faces belong to cut-planes and
$\gamma\in\Gamma$ acts 
on such a function by pull back 
of the translation with $\pi^\perp(\gamma)$. Intersections of
cut-planes are affine subspaces of smaller dimension and we call such
an affine space a singular space. On each singular subspace $L$, say
of dimension $k$, we
have a similar structure as on $\R^\np$: The
intersections of the cut-planes with $L$ are affine subspaces of
co-dimension $1$ in $L$. We let $C_k$ be the module generated by
indicator functions on $k$-dimensional polyhedra in a 
$k$-dimensional singular space
whose faces belong to cut-planes.  

The polyhedral structure
and the fact that $\R^\np$ is contractible give rise to an 
acyclic complex $C_\np\to C_{\np-1}\to \cdots\to C_0$ of
$\Gamma$-modules whose
differential is reminiscent of the boundary map in polyhedral
complexes. 
As a result the homology $H_*(\Gamma,C_\np)$ may be
computed by breaking the complex into $\np$ short exact sequences
$$ 0\to C_k^0 \to C_k\to  C_{k-1}^0\to 0,\quad 0\leq k < \np$$
with $C_k^0$ equal to the image of the boundary map $\delta:
C_{k+1}\to C_k$ which is, of course, the same as the kernel of $\delta:
C_{k}\to C_{k-1}$
(and thus $C_\np = C_{\np-1}^0$) and $C_{-1}^0=\Z$.
Each such short exact sequence gives rise to a long exact sequence in
homology and in particular to a connecting homomorphism
$\gamma_k:H_p(\Gamma,C_{k-1}^0)\to H_{p-1}(\Gamma,C_{k}^0)$. 
We now recall:
\begin{theorem}[\cite{BargeKellendonk}]
The maximal equicontinuous factor $\hat \E$ of a projection pattern is
naturally isomorphic to the torus $\R^n\times\R^\np/\Gamma$.
\end{theorem}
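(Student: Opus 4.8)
The plan is to reduce the theorem, via the identification $\Xmax\cong\hat\E$ from the previous subsection, to a computation of the eigenvalue group $\E$ followed by Pontryagin duality. Write $T:=(\R^n\times\R^\np)/\Gamma$; since $\Gamma$ is a cocompact lattice of full rank, $T$ is a compact abelian group whose character group $\hat T$ is the dual lattice $\Gamma^*=\{(\beta,\beta^\perp)\in{\R^n}^*\times{\R^\np}^*:\langle(\beta,\beta^\perp),\gamma\rangle\in\Z\text{ for all }\gamma\in\Gamma\}$. Let $\R^n$ act on $T$ by $t\cdot[x]=[x+(t,0)]$. This Kronecker action is equicontinuous, being translation by a one-parameter subgroup of a compact group, and it is minimal: a character $(\beta,\beta^\perp)\in\Gamma^*$ restricts to the orbit of the identity as $t\mapsto e^{2\pi\imath\beta(t)}$, so it is trivial on the orbit only when $\beta=0$, and then integrality on $\Gamma$ together with the density of $\pi^\perp(\Gamma)$ forces $\beta^\perp=0$. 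Thus $\{[(t,0)]:t\in\R^n\}$ is dense and $(T,\R^n)$ is a minimal equicontinuous system.

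Next I would produce the factor map $p:\Omega\to T$ and read off the eigenfunctions it supplies. The map is the torus parametrization of the model set: on the orbit of the base pattern one sets $p(P_K-t)=[(t,0)]$ and extends by continuity. Concretely, if two translates of $P_K$ agree on a large ball about the origin, then the positions of the lattice points, lifted through $\pi^\perp$, pin the internal phase down to high precision—this is where the density of $\pi^\perp(\Gamma)$ is used—so the orbit map is uniformly continuous from the tiling metric into $T$ and extends to a continuous, $\R^n$-equivariant surjection $p$. Pulling back characters then gives eigenfunctions: for $\chi=(\beta,\beta^\perp)\in\Gamma^*=\hat T$ the function $f_\chi:=\chi\circ p$ is continuous and satisfies $f_\chi(\omega-v)=e^{2\pi\imath\beta(v)}f_\chi(\omega)$, so $\beta\in\E$. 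Hence $\pi^\|(\Gamma^*)\subseteq\E$, and since the restriction of $\pi^\|$ to $\Gamma^*$ is injective (its kernel would be a character trivial in the $\R^n$-direction, excluded exactly as above), this embeds $\Gamma^*=\hat T$ into $\E$.

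It remains to prove the reverse inclusion $\E\subseteq\pi^\|(\Gamma^*)$, which is both the heart of the argument and the point at which the almost canonical hypotheses on $K$ are indispensable: I must show the pattern carries no continuous eigenvalue beyond those coming from the dual lattice. The expectation is to argue that a continuous eigenfunction is, by minimality, determined by its restriction to the canonical transversal, and that the rational orientation of the faces of $K$ together with the cut-plane description of the singular set $S$ forces the associated character to be locally constant across cut-planes, hence to descend through $\pi^\perp$ to a character of the internal torus; equivalently one invokes the known fact that regular polyhedral model sets have purely discrete dynamical spectrum with eigenvalue group exactly $\pi^\|(\Gamma^*)$. Granting this, $\E\cong\Gamma^*=\hat T$ via $\pi^\|$, and taking Pontryagin duals gives $\Xmax\cong\hat\E\cong\hat{\hat T}\cong T$; the isomorphism is natural because it is induced throughout by $\pi^\|$, $\pi^\perp$, and the parametrization $p$, none of which involve arbitrary choices. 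The main obstacle is precisely this spectral rigidity—establishing the \emph{continuity} (as opposed to merely $L^2$-membership) of the eigenfunctions and excluding extraneous eigenvalues—whereas the forward inclusion and the duality bookkeeping are routine.
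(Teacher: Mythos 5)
The paper itself contains no proof of this statement: it is recalled from \cite{BargeKellendonk} (``We now recall:''), so there is no internal argument to compare yours against, and I judge the proposal on its own terms. Your architecture --- identify the group $\E$ of continuous eigenvalues with the dual lattice $\Gamma^*$ via $\pi^\|$, then apply Pontryagin duality together with the identification $\Xmax\cong\hat\E$ from the preliminaries --- is the standard and correct route, and your first two paragraphs (the Kronecker system on $(\R^n\times\R^\np)/\Gamma$, its minimality via density of $\pi^\perp(\Gamma)$, the torus parametrization $p$, and the resulting inclusion $\pi^\|(\Gamma^*)\subseteq\E$) are sound.

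The gap is exactly where you locate it, and neither of the two ways you offer to close it works as stated. Route (b), invoking that regular polyhedral model sets have pure point dynamical spectrum with eigenvalue group exactly $\pi^\|(\Gamma^*)$, is not a proof but a citation of a fact that already contains the theorem: given the duality bookkeeping, ``$\E=\pi^\|(\Gamma^*)$'' \emph{is} the theorem, so this route reduces your proof to the same citation the paper makes. Route (a) misidentifies the mechanism: a character of $\R^\np$ cannot be ``locally constant across cut-planes,'' and the almost canonical hypotheses on $K$ (rational orientation of faces, the cut-plane structure of $S$) are in fact irrelevant to this step. The correct argument is a continuity-at-one-point argument needing only that $0$ is nonsingular and that $\pi^\perp|_\Gamma$ is injective with dense image. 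Let $f$ be a continuous eigenfunction with eigenvalue $\beta$, of modulus $1$ by minimality. Since $P_K-\pi^\|(\gamma)=P_{K-\pi^\perp(\gamma)}$, and since $P_{K-\pi^\perp(\gamma)}\to P_K$ in the tiling metric as $\pi^\perp(\gamma)\to 0$ (for fixed $R$ only finitely many $\eta\in\Gamma$ have $\pi^\|(\eta)$ in the $R$-ball and $\pi^\perp(\eta)$ near $K$, and none of them lies on $\partial K$ because $0$ is nonsingular), continuity of $f$ at $P_K$ forces $e^{2\pi\imath\beta(\pi^\|(\gamma))}\to 1$ whenever $\pi^\perp(\gamma)\to 0$. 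Hence the homomorphism $\pi^\perp(\gamma)\mapsto e^{2\pi\imath\beta(\pi^\|(\gamma))}$, well defined on the dense subgroup $\pi^\perp(\Gamma)$ by injectivity of $\pi^\perp|_\Gamma$, is continuous at the identity, therefore uniformly continuous, and so extends to a character $x\mapsto e^{2\pi\imath\beta^\perp(x)}$ of $\R^\np$; the resulting relation $\beta(\pi^\|(\gamma))-\beta^\perp(\pi^\perp(\gamma))\in\Z$ for all $\gamma\in\Gamma$ says precisely that $(\beta,-\beta^\perp)\in\Gamma^*$, i.e.\ $\beta\in\pi^\|(\Gamma^*)$. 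With this lemma inserted in place of your ``Granting this,'' the proposal becomes a complete and self-contained proof.
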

Hence, upon identifying $\R^n\times\R^\np$ with its dual we have
$E = \Gamma$ and so we may identify $H^p(\hat \E) = \Lambda^p\Gamma =
H^p(\Gamma,\Z)\cong H_{n+\np-p}(\Gamma,\Z)$, the last identification
by Poincar\'e duality. 
\begin{theorem}[\cite{GHK}]
Under the above identifications $H^p(\Omega)\cong
H_{n-p}(\Gamma,C_{\np})$ and $H^p(\hat \E) \cong H_{n+\np-p}(\Gamma,\Z)$
the map $\pi^*:H^p(\hat \E)\to H^p(\Omega)$ gets identified with the
composition of 
connecting maps $\gamma_{\np-1}\circ\cdots\circ \gamma_0:H_{n+\np-p}(\Gamma,\Z)\to H_{n-p}(\Gamma,C_{\np})$.
\end{theorem}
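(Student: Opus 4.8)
The plan is to split the statement into a piece of pure homological algebra and a geometric identification of the factor map. The homological fact is the standard \emph{dimension-shifting} principle applied to the resolution
$$0\to C_{\np}\to C_{\np-1}\to\cdots\to C_0\to\Z\to 0$$
of $\Gamma$-modules that underlies the whole construction: breaking it into the short exact sequences $0\to C_k^0\to C_k\to C_{k-1}^0\to 0$ and composing the connecting homomorphisms $\gamma_k$ yields a canonical map $H_q(\Gamma,\Z)\to H_{q-\np}(\Gamma,C_{\np})$, which is nothing but cap product with the Yoneda class of this $\np$-fold extension in $\mathrm{Ext}^{\np}_{\Z\Gamma}(\Z,C_{\np})$ and is therefore functorial. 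Setting $q=n+\np-p$ gives precisely a map $H_{n+\np-p}(\Gamma,\Z)\to H_{n-p}(\Gamma,C_{\np})$ with the source and target named in the theorem. The entire problem is then to prove that, after the two Poincar\'e duality identifications, $\pi^*$ coincides with this dimension-shift homomorphism.

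To see this I would work with the explicit models behind the two sides. The factor $\hat\E=\R^n\times\R^{\np}/\Gamma$ is the quotient of the contractible space $\R^n\times\R^{\np}$, so its cohomology is $H^*(\Gamma,\Z)$ with constant coefficients; the hull $\Omega$ is modelled, via the rope/cut-up construction, by the space built from $\Fi=F_c/\Z^{\np}$, whose $\Z^n$-cohomology with coefficients $C(\Fi,\Z)$ is folded into $H_*(\Gamma,C_{\np})$ by Poincar\'e duality for $\Z^n$ together with the induction (Shapiro) isomorphism that reabsorbs the freely acting $\Z^{\np}$-quotient. On these models $\pi$ is the \emph{reconnection} map that glues the disconnected pieces of $F_c$ back into $\R^{\np}$, equivalently that collapses the extra Cantor directions created by the cut-planes. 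The crucial point to establish is that on coefficients this reconnection realizes exactly the augmentation $C_\bullet\to\Z$ of the polyhedral resolution: the module $C_{\np}$ supporting $H^*(\Omega)$ is carried through the full complex down to the constant module $\Z$ supporting $H^*(\hat\E)$.

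Granting this, I would assemble the square
$$\begin{array}{ccc} H^p(\hat\E) & \stackrel{\pi^*}{\longrightarrow} & H^p(\Omega)\\ \cong\downarrow & & \downarrow\cong\\ H_{n+\np-p}(\Gamma,\Z) & \longrightarrow & H_{n-p}(\Gamma,C_{\np}) \end{array}$$
whose bottom arrow is the map on $\Gamma$-homology induced by the reconnection's effect on coefficients. Naturality of the connecting homomorphism then lets me read this bottom arrow, one short exact sequence $0\to C_k^0\to C_k\to C_{k-1}^0\to 0$ at a time, as the successive $\gamma_k$; composing over $0\le k<\np$ identifies it with $\gamma_{\np-1}\circ\cdots\circ\gamma_0$ and finishes the argument via the dimension-shifting lemma.

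The main obstacle is exactly the compatibility asserted in the middle step. One must verify that a single geometric reconnection map induces, after Poincar\'e duality for the $\Z^n$-bundle structure on the $\Omega$-side (dimension $n$), Poincar\'e duality for $\Gamma=\Z^{n+\np}$ on the torus side (dimension $n+\np$), and the Shapiro isomorphism trading the $\Z^{\np}$-quotient for $\Gamma$-coefficients, precisely the \emph{untwisted} augmentation of the resolution rather than some corrected variant. Matching the two dualities of differing dimension is what manufactures the degree drop by $\np$, and I expect the delicate part to be checking that the singular strata contribute no correction terms, so that the class lands on the bottom module $C_{\np}$ with the correct sign. Once this bookkeeping is in place, the purely algebraic identification of the iterated connecting map with the $\mathrm{Ext}$-class action completes the proof.
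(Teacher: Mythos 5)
The paper itself contains no proof of this statement: it is recalled verbatim from \cite{GHK}, and the surrounding text only sets up the identifications (the acyclic complex of $\Gamma$-modules, its decomposition into short exact sequences, and the Poincar\'e-duality/Shapiro isomorphisms). So your proposal can only be measured against that setup and the strategy of the cited work.

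Your algebraic frame is correct and coincides with the paper's: composing the connecting maps $\gamma_{\np-1}\circ\cdots\circ\gamma_0$ is the dimension-shifting map given by the Yoneda class of the $\np$-fold extension in $\mathrm{Ext}^{\np}_{\Z\Gamma}(\Z,C_{\np})$, and the whole problem is indeed to match this with $\pi^*$ under the dualities. The gap is that this matching --- which you correctly single out as ``the main obstacle'' --- is the \emph{entire} content of the theorem, and your argument assumes it (``Granting this \dots'', ``Once this bookkeeping is in place \dots''): the proposal reduces the statement to itself. Moreover, the mechanism you offer for closing it does not parse. You describe the bottom arrow of your square as ``the map on $\Gamma$-homology induced by the reconnection's effect on coefficients''; but a homomorphism of coefficient modules induces a \emph{degree-preserving} map on $H_*(\Gamma,-)$, while the bottom arrow drops degree by $\np$, so it cannot be coefficient-induced. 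What the reconnection $F_c\to\R^{\np}$ actually induces on coefficients is the inclusion of constants $\Z=C(\R^{\np},\Z)\hookrightarrow C(F_c,\Z)$ (there is no pullback on compactly supported functions, since $C_c(\R^{\np},\Z)=0$), and that map computes the \emph{top} arrow $\pi^*$, not the bottom one. Consequently, ``naturality of the connecting homomorphism'' has nothing to act on: naturality needs maps of short exact sequences, and you have produced no morphism relating the geometric side to $0\to C_k^0\to C_k\to C_{k-1}^0\to 0$ for any $k$.

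Concretely, what is missing is the work done in \cite{FHK,GHK}: one must realize each short exact sequence geometrically through the stratification of $\R^{\np}$ by singular $k$-spaces --- for instance, in codimension $\np=1$, the jump homomorphism $C_{\np}\cong C_c(F_c,\Z)\to C_0$, $f\mapsto\sum_x\bigl(f(x^+)-f(x^-)\bigr)[x]$, exhibits $C_{\np}$ as $C_0^0$ and realizes the extension --- and then verify, at the level of resolutions (or via the projection formula intertwining cup and cap products under Poincar\'e duality for $\Z^n$, Poincar\'e duality for $\Gamma$, and the Shapiro isomorphism for the free $\Z^{\np}$-action), that the class of the constants inclusion $1\in H^0(\Gamma,C(F_c,\Z))$ corresponds, under the resulting degree-shift isomorphism $H^0(\Gamma,C(F_c,\Z))\cong H^{\np}(\Gamma,C_{\np})$, to the Yoneda class of the polyhedral extension, with consistent signs on the singular strata. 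That verification is the proof; without it your proposal is an accurate restatement of the theorem together with a correct account of the standard homological algebra, but not an argument.
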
 
\subsection{Injectivity of $\pi^*$.}
We have now the tools at hand to find out in which degrees $\pi^*$ is
injective. In fact, the long exact sequence in homology corresponding
to the above exact sequence is
$$ \to H_p(\Gamma,C_k)\stackrel{\delta_*'}{\to} H_p(\Gamma,C_{k-1}^{0})\stackrel{\gamma_k}{\to} H_{p-1}(\Gamma,C^0_k){\to} $$
where $\delta'$ is the boundary map with target space restricted to
its image. Thus $\gamma_k$ is injective whenever
$\delta_*'=0$. Now the module $C_k$ decomposes, $C_k =
\bigoplus_{\theta\in I_k} C_k^\theta \otimes \Z[\Gamma/\Gamma^\theta]$
where $I_k$ indexes the set of $\Gamma$-orbits of singular spaces of
dimension $k$ and $\Gamma^\theta\subset \Gamma$ is the subgroup
stabilizing the singular space of orbit type $\theta$. It follows 
that  $H_p(\Gamma,C_k)=\bigoplus_{\theta\in I_k}
H_p(\Gamma^\theta,C_k^\theta)$.   
Since the singular spaces which make up $C_k^\theta$ are $k$-dimensional
$H_p(\Gamma^\theta,C_k^\theta)=0$ if $p>k$. But we have also $H_p(\Gamma^\theta,C_k^\theta)=0$ if $p>\rk\Gamma^\theta - \dim \R \Gamma^\theta$, because $\Gamma^\theta$ contains a subgroup of rank $\dim \R \Gamma^\theta$ which acts freely on $C_k^\theta$. To summarize
$$H_p(\Gamma,C_k)=0 \quad\mbox{if}\quad p > \min\{k,r_k\}$$
where $r_k = \max_{\theta\in I_k} (\rk\Gamma^\theta - \dim \R \Gamma^\theta)$.
In particular,
$\gamma_{\np-1}:H_{p+1}(\Gamma,C_{\np-2}^0)\to H_{p}(\Gamma,C_{\np})\cong H^{n-p}(\Omega)$ is injective if $p\geq \min\{\np-1,r_{\np-1}\}$.  

We now consider first the case in which
the ranks of the stabilizers are minimal. Since the stabilizer
of  $W_i$ must have at least rank $\np-1$ the minimal case is 
$r_{\np-1} = 0$ which then implies that $r_k= 0$ for all $k$. 
This is in fact the generic case and it corresponds to the pattern
having maximal complexity among almost canonical projection patterns;
that is, the growth exponent for the complexity function is $\np
n$ \cite{Julien}. We see from the above that $H_p(\Gamma,C_k^\theta)=0$ if $p>0$ and
therefore $\pi^*$ is injective in all degrees. But 
$H^k(\Omega)$ is infinitely generated except if $\np=1$.

The situation is different if we require that
%We now consider first the case in which 
the cohomology is finitely generated. By the results of \cite{FHK} and
\cite{Julien} this is precisely the case if $\nu:=\frac{n+\np}\np$ is
an integer and the rank of the stabilizer of a singular plane is $\nu$
times its dimension, i.e.\ $r_k = (\nu-1)k\geq k$. We find it interesting to note that this case corresponds to the case of minimal complexity, that is, the number of patches of size $R$ grows polynomially with exponent $n$ \cite{Julien}.
This yields the bound that $\pi^*:H^k(\hat \E)\to H^k(\Omega)$ is
injective if $k\leq n - \np + 1=(\nu-2)n + 1$. 
Furthermore, the calculations done in \cite{FHK} (for codimension $3$ patterns) show that this is the best possible bound: $\pi^*$ is never injective in degree $k > (\nu-2)n + 1$. 
%since in all cases considered the $\delta_*'$-map above is non-zero if its domain group is non-trivial. 
In particular, for the standard tilings like the Penrose, Amman-Beenker, Socolar, and the icosahedral tilings, $\nu=2$, and hence $\pi^*$ is injective only in degree $0$ and $1$.
\subsection{On the cokernel of $\pi^*$.} We consider here only the case of
finitely generated cohomology. It comes not as a surprise that then
$\coker\pi^*$ is also finitely generated, see \cite{FHK}. It can,
however, have torsion in higher degrees \cite{GHK-ICQ,GHK}: the T\"ubingen
Triangle Tiling is an example of a $2$-dimensional tiling which has
torsion in its second cohomology and all icosahedral
tilings considered in \cite{GHK} have torsion in degree $3$ (the
Ammann-Kramer and the dual canonical $D_6$ tiling also have torsion in degree
$2$). A more subtle question is whether $\pi^*$ is always onto a
direct summand; that is, whether the exact sequence 
\begin{equation}\label{exact sequence 2}
0\to H^k(\hat\E)\stackrel{\pi^*}\longrightarrow H^k(\Omega) \to \coker\pi^* \to 0.
 \end{equation}
splits and hence the torsion in the tiling cohomology agrees with the
torsion of $\coker\pi^*$.
While this is generally true if $\np\leq 2$ 
it could be answered affirmatively in higher codimension $\np$ only
for rational
projection patterns \cite{GHK}. 
Rational projection patterns are not only
almost canonical but satisfy an additional rationality assumption,
roughly that the cut-planes are projections onto $\R^\np$ of lattice
planes in $\Q\Gamma$. This assumption  
allows for the construction of a torus arrangement in $\hat \E$. Such
torus arrangements were was first proposed  in \cite{Kalugin}.  

\subsection{The frequency module}
In this section we will prove that the frequency module,
$\freq(\Omega)$, of an almost canonical projection pattern is always
finitely generated and hence the sequence (\ref{exact sequence 2}) splits.

We start with some known background material. It is known that the
factor map $\pi$ is almost everywhere one-to-one and the measure on
$\Omega$ the push forward of the (normalized) Haar-measure on
$\hat\E$. This implies that the frequency module is generated by the
volumes of all polyhedra in $\R^\np$ whose faces lie in $S$. Here the
volume of a polyhedron is measured with the help of the Lebesgue measure
normalized so that the window $K$ has volume $1$.

\begin{theorem}
The frequency module
$\freq(\Omega)$ of an almost canonical projection pattern is finitely generated.
\end{theorem}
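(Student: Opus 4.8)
The plan is to show that the generators of $\freq(\Omega)$ described in the preceding paragraph---the volumes of polyhedra in $\R^\np$ whose faces lie in the singular set $S$---can all be expressed as integer combinations of finitely many \emph{basic} volumes, using the almost canonical structure of the window. First I would exploit the key structural hypothesis: by the third assumption on the window, the singular set $S$ is a union of $\pi^\perp(\Gamma)$-translates of finitely many affine hyperplanes $W_i$, $i\in I$, each of rational orientation with respect to $\pi^\perp(\Gamma)$. Consequently the arrangement of cut-planes is highly structured, being generated from finitely many orbits under the translation action of $\pi^\perp(\Gamma)$.

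The central reduction is that any polyhedron with faces in $S$ decomposes into a finite union of cells of the cut-plane arrangement, so its volume is a finite integer sum of volumes of such cells. I would therefore aim to show that the cells fall into finitely many $\pi^\perp(\Gamma)$-translation classes, and that translation preserves volume. For this, I would invoke the finiteness already implicit in the almost canonical setup: the acceptance domains $U_P$ partition the window $K$ (or $\R^\np$ modulo the lattice) into finitely many polyhedral pieces up to translation, because $\Omega$ has finite local complexity and the patches of a given size are finite in number. Since $\nu(U_P)=\freq(P)$ and $\nu$ is the (normalized) Haar measure pushed forward along the a.e.\ one-to-one map $\pi$, each frequency equals the normalized volume of the corresponding acceptance domain, and there are only finitely many such domains up to the volume-preserving translations in $\pi^\perp(\Gamma)$.

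Assembling these, a finite generating set for $\freq(\Omega)$ is given by the normalized volumes of the finitely many cell-types of the cut-plane arrangement inside a fundamental domain, and every patch frequency is an integer combination of these. I expect the main obstacle to be the bookkeeping that ties the purely geometric decomposition of polyhedra with faces in $S$ to the dynamically defined frequencies: one must verify carefully that the a.e.\ bijectivity of $\pi$ together with unique ergodicity really does identify $\freq(\Omega)$ with the additive group generated by these finitely many normalized cell volumes, rather than a strictly larger group arising from limits of ever-finer polyhedra. Controlling this requires using the finiteness of hyperplane orbits to guarantee that no new volumes appear under arbitrarily large patches---that is, that the group generated stabilizes---which is precisely where the almost canonical (as opposed to general model set) hypotheses do the essential work.
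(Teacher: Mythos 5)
Your central reduction fails, and with it the whole approach. By the standing assumptions, $\pi^\perp(\Gamma)$ has dense image, so the union $S$ of all cut-planes $W_i-\pi^\perp(\gamma)$ is dense in $\R^\np$. Consequently the ``cut-plane arrangement'' has no cells of positive volume at all: the complement of $S$ contains no open set, so a decomposition of a polyhedron into finitely many cells of the full arrangement does not exist. If instead you mean, for each polyhedron, the finite sub-arrangement consisting of the cut-planes through its faces, then the resulting cells depend on the polyhedron, and there are infinitely many of them up to $\pi^\perp(\Gamma)$-translation: density of $S$ produces polyhedra with faces in $S$ of arbitrarily small diameter, hence of arbitrarily small positive volume, and since translation preserves volume, finitely many cell types up to translation would force the set of all such volumes to be finite --- which is false. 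The same overreach occurs in your appeal to finite local complexity: for each fixed $R$ there are only finitely many patches of diameter at most $R$, hence finitely many acceptance domains, but $\freq(\Omega)$ is generated by the frequencies of patches of \emph{all} sizes, and these volumes form an infinite subset of $\R$ accumulating at $0$. So the ``stabilization'' you invoke at the end --- that no new volumes appear for arbitrarily large patches --- simply does not hold; new, ever smaller, volumes appear at every scale.

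What is true, and what must be proved, is the strictly weaker statement that this infinite set of volumes is contained in a single finitely generated $\Z$-submodule of $\R$; finite generation of the module is not finiteness of the set of values generating it. The paper's mechanism for this is arithmetic rather than combinatorial: subdivide any polyhedron with faces in $S$ into $\np$-simplices whose vertices are cut-points (points that are isolated intersections of $\np$ cut-planes); show that the difference set $\cP-\cP$ of cut-points lies in a finitely generated $\Z$-module $M$, using that two cut-points on a common singular line which lie on translates of the same $W_i$ differ by an element of the projection of $\pi^\perp(\Gamma)$ along $W_i$ onto that line, together with the finiteness of the set of hyperplane classes $W_i$ and of the directions of singular lines; and finally observe that the volume of an $\np$-simplex is a determinant of vectors in $M$, i.e.\ an integer polynomial in the coordinates of finitely many generators of $M$, hence lies in a finitely generated $\Z$-module. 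Your proposal contains no substitute for this step, and its central finiteness claim is false, so the gap is essential.
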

\begin{proof}
We call a point $x\in\R^\np$ a cut-point if it is the unique point in
the intersection of $\np$ cut-planes. Clearly, any polyhedron whose
faces lie in $S$ has vertices which are cut-points. Given that the
cut-points are dense we may subdivide any such polyhedron into
simplices of dimension $\np$ such that all vertices of the simplices
are cut-points (we do not care whether the newly introduced faces
lie in $S$). The theorem thus follows if we can show that the
$\Z$-module of volumes of all $\np$-simplices whose vertices are
cut-points is finitely generated.

Denote by $\cP$ the cut points. We claim that $\cP-\cP$ is contained in a
finitely generated $\Z$-module. Suppose first that $x,y\in\cP$ lie in a
common singular space $L$ of dimension $1$
and both on the intersection with the same class of
cut-plane, i.e.\ $\{x\} = L\cap (W_i-\pi^\perp(\gamma_x))$ and $\{y\} =
L\cap (W_i-\pi^\perp(\gamma_y))$. Then $x-y$ belongs to
$\pi^L_{W_i}(\Gamma)$, the projection along $W_i$ onto $L$ of
$\pi^\perp(\Gamma)$. 
This group is, of course, finitely generated and since there
are only finitely many $W_i$ we see that   $(\cP\cap L) -(\cP\cap L)$
is contained in a finitely generated $\Z$-module. Now we can go from
any
cut point $x$ to any other cut point $y$ along singular lines and since
there are only finitely many directions of singular lines $x-y$ lies in a
finitely generated $\Z$-module $M$, say.

The volume of an $\np$-simplex with vertices $(x_0,\cdots,x_\np)$ is
one half of the determinant of the $\np$ vectors
$x_i-x_0$, $i=1,\cdots,\np$ which all lie in $M$. Hence the determinant also lies in a finitely generated $\Z$-module.
\end{proof}
\begin{cor}For almost canonical projection patterns the sequence
  (\ref{exact sequence 2}) splits.
\end{cor}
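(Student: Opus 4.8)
The plan is to derive the splitting directly from the finite generation of $\freq(\Omega)$ established above, using the Ruelle--Sullivan map as a bridge between the two exact sequences that have $H^n(\Omega)$ in the middle. First, since $\freq(\Omega)$ is a finitely generated subgroup of $\R$ it is free abelian, hence projective; therefore the sequence (\ref{exact sequence 1}),
$$0\to \Inf(\Omega)\to H^n(\Omega)\stackrel{\tau}\to \freq(\Omega)\to 0,$$
splits, and I would fix a section $s\colon\freq(\Omega)\to H^n(\Omega)$ of $\tau$ together with the resulting decomposition $H^n(\Omega)=\Inf(\Omega)\oplus s(\freq(\Omega))$.

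The remaining task is to show that (\ref{exact sequence 2}) splits in the top degree, that is, that $\im\pi^*$ is a direct summand of $H^n(\Omega)$ (equivalently, that all torsion of $H^n(\Omega)$ is carried by $\coker\pi^*$). Here I would use the Remark following Lemma~\ref{lem-2}: in the top degree $\tau\circ\pi^*=\Lambda^n\imath$, so $\tau$ sends $\im\pi^*$ onto the finitely generated subgroup $\Lambda^n\imath(\Lambda^n\Gamma)$ of $\freq(\Omega)$. The idea is to combine the section $s$ with the homological description of $\pi^*$ from \cite{GHK} as the composite of connecting homomorphisms $\gamma_{\np-1}\circ\cdots\circ\gamma_0$, and to exploit the orbit decomposition of the modules $C_k$ in order to construct a retraction $H^n(\Omega)\to\im\pi^*$.

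The hard part will be that $\im\pi^*$ is not contained in the free summand $s(\freq(\Omega))$: it meets the infinitesimals $\Inf(\Omega)$, which is in general not finitely generated (indeed $H^n(\Omega)$ itself is infinitely generated once $\np\ge2$ in the generic case), so the splitting cannot come merely from freeness of the quotient $\coker\pi^*$. One must instead show that the image of $\pi^*$ is a \emph{pure} subgroup of $H^n(\Omega)$, compatibly with the order structure on the top cohomology, and it is precisely the finite generation of $\freq(\Omega)$ that pins down the free part of $H^n(\Omega)$ onto which $\tau\circ\pi^*$ maps and permits splitting off a complement. I expect this purity/compatibility statement, with the bookkeeping supplied by the polyhedral model of \cite{FHK,GHK}, to be the crux of the argument.
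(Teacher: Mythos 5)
Your opening paragraph is, in fact, the whole of what the paper itself does. The corollary is stated with no proof, as an immediate consequence of the theorem; the only mechanism the paper supplies is the remark following (\ref{exact sequence 1}) in the preliminaries, that this sequence splits when $\freq(\Omega)$ is finitely generated (a finitely generated subgroup of $\R$ is torsion free, hence free, hence projective). So what finite generation of the frequency module actually delivers -- for the paper just as for you -- is the splitting of the $\tau$-sequence (\ref{exact sequence 1}). Neither the paper nor your proposal contains an argument that converts this into a retraction of $H^k(\Omega)$ onto $\im\pi^*$, i.e.\ into a splitting of (\ref{exact sequence 2}); indeed the paper's own preceding subsection records that for codimension $\np\geq 3$ this splitting had been established only for \emph{rational} projection patterns \cite{GHK}, which is strong evidence that no soft argument from $\freq(\Omega)$ alone can do it. Your instinct that a genuinely new ingredient is needed here is therefore sound, but your proposal does not supply it.

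The concrete gaps are these. First, the retraction onto $\im\pi^*$ is never constructed; you only ``expect'' a purity statement to be the crux. Second, purity cannot be the crux: a pure, finitely generated free subgroup of an infinitely generated abelian group need not be a direct summand. For example, $A=\Z(1,1)\subset B=\Z[1/2]\oplus\Z[1/3]$ has $B/A$ torsion free (so $A$ is pure), yet no retraction $B\to A$ exists, since every homomorphism $\Z[1/2]\to\Z$ and $\Z[1/3]\to\Z$ vanishes. In degree one, purity of $\im\pi^*$ is exactly the content of Theorem~\ref{coker torsion free}, and the paper's Example~\ref{ex-3l} shows that a torsion-free cokernel does not force splitting -- so your proposed crux is both already known and insufficient. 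Third, restricting to the top degree is a wrong turn: in the finitely generated case with $\np\geq 2$, $\pi^*$ is injective only in degrees $k\leq(\nu-2)n+1<n$, so (\ref{exact sequence 2}) with $k=n$ is not even exact as written, while the corollary concerns all degrees $k$. (Also, your parenthetical ``equivalently, that all torsion of $H^n(\Omega)$ is carried by $\coker\pi^*$'' is an implication of splitting, not an equivalence.) Net effect: your proposal proves the splitting of (\ref{exact sequence 1}), which is all that the paper's ``hence'' can support, but not the splitting of (\ref{exact sequence 2}) that the statement literally asserts.
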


\section{Substitution tilings}\label{Substitution tilings}
We recall quickly how to calculate the cohomology of substitution tiling space $\Omega=\Omega_\Phi$ referring the reader to \cite{AP} for more details.

The collared Anderson-Putnam complex $Y$ is an $n$-dimensional
CW-complex whose $n$-cells are collared prototiles. Two of these cells
are glued along $(n-1)$-faces if translates of the corresponding
collared prototiles meet along a translate of that face in some tiling
in $\Omega$. There is a natural map $p:\Omega\to Y$  
assigning to a tiling the point in $Y$ which corresponds to the position of the origin $0$ in the collared tile of the tiling that contains $0$. Furthermore, the substitution $\Phi$ induces a continuous surjection $F:Y\to Y$ with $p\circ\Phi=F\circ p$. By the universality property of the inverse limit,
$p$ induces a map $\hat{p}:\Omega\to \inv (Y,F)$ where $\inv (Y,F)$
denotes the inverse limit of the stationary system $\cdots
Y\stackrel{F}{\to} Y  \stackrel{F}{\to} Y$. It is shown in \cite{AP}
that $\hat p$ is a homeomorphism that conjugates $\Phi$ with the shift
$\hat{F}$ on $\inv (Y,F)$ and hence $\hat{p}:H^k(\inv (Y,F))\to
H^k(\Omega)$ is an isomorphism. By the continuity property of \v{C}ech
cohomology, $H^k(\inv (Y,F))$ is naturally isomorphic with $\dlim
(H^k(Y),F^*)$. This direct limit is computable and we are interested
in $k=1$.

$H^1(Y)$ is a free abelian group of finite rank, and so the stationary system  $H^1(Y)\stackrel{F^*}{\to} H^1(Y)  \stackrel{F^*}{\to} H^1(Y)\cdots $ is of the form $\Z^N\stackrel{A}{\to} \Z^N  \stackrel{A}{\to} \Z^N\cdots$ for some $N$ and $N\times N$ integer matrix $A$. 
Let $ER(A)=\bigcap_n A^n\Q^N$ be the eventual range of $A$. $ER(A)
=\Q^N$ if $A$ has non-vanishing determinant, but always $ER(A)=A^N\Q^N$. Then
$$\dlim (\Z^N,A) =\{v\in ER(A) : \exists n A^n v\in\Z^N\}= \bigcup_n \tilde A^{-n}\Sigma,$$
where $\Sigma = ER(A)\cap \Z^N$ and $\tilde A$ is the restriction of $A$ to $ER(A)$.

If the substitution forces its border then the above construction
works already if one considers non-collared tiles \cite{AP} for the
construction of the Anderson-Putnam complex $Y$. 
In the one dimensional context, that is for a substitution which can
be symbolically defined on an alphabet of $N$ letters, and for a
substitution which forces its border in the sense 
that all substituted tiles start with the same tile and all end with the same tile 
(that is, the symbolic substitution has a common prefix and a common suffix), one may replace
the Anderson-Putnam complex $Y$ simply by a bouquet, $X$, of circles, one circle for each letter. In this case, $H^1(X) \cong \Z^N$ and the matrix $A$ representing $H^1(X)  \stackrel{F^*}{\to} H^1(X)$ in the basis provided by the cohomology classes of the circles is the transpose
of the incidence matrix for the substitution. It turns out that, at least for
determining the cohomology,  
we may also work with the bouquet $X$ as long as the symbolic
substitution has either a common prefix or a common suffix
\cite{AndressRobinson,bd1}.

\subsection{One dimensional irreducible substitutions}
%\subsection{Order structure for one-dimensional tilings}
For one-dimensional tilings the first cohomology group arrises in both
exact sequences,  the degree $1$ version of the sequence (\ref{exact sequence 2}),
namely 
\begin{equation}\label{exact sequence 3}
 0\to E\stackrel{\theta}\longrightarrow H^1(\Omega)\to \coker\theta\to 0
 \end{equation}
and, assuming unique ergodicity, sequence (\ref{exact sequence 1})
$$ 0\to \Inf(\Omega)\to H^1(\Omega)\stackrel{\tau}\longrightarrow
\freq(\Omega)\to 0.$$
Some of the main results of \cite{AndressRobinson} give complete
information about the structure of the above sequences
in the context of one-dimensional primitive, irreducible substitutions
of FLC. The work
in \cite{AndressRobinson} starts with symbolic substitutions which are then
realized geometrically by assigning a length to each symbol so as to
realize it as an interval. Our case is slightly more restrictive, and
can be compared if the lengths of the symbols are obtained from the left
Perron Frobenius vector of the substitution matrix. The results of 
\cite{AndressRobinson} require generally that the substitution forces
its border on one side in the sense that the symbolic
substitution has a common prefix. A further assumption made on the
substitution is that the characteristic polynomial of its substitution
matrix is irreducible or, what amounts to the same, the dilation
factor $\lambda$ is an algebraic integer of degree equal to the number
of prototiles (letters). One simply says that the substitution is
irreducible in that case. 
\begin{theorem}[\cite{AndressRobinson}]
Consider a one-dimensional substitution tiling with common
prefix. Assume furthermore that the substitution is irreducible. 
Then $\Inf(\Omega)=0$. 
Moreover, if its dilation factor is a Pisot number then $\coker\theta = 0$.
\end{theorem}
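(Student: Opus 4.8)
The plan is to handle the two assertions separately, using the common-prefix hypothesis throughout to model the cohomology by the bouquet of circles, so that $H^1(\Omega)\cong\dlim(\Z^N,A)$ with $A$ the transpose of the substitution matrix. Irreducibility means the characteristic polynomial of $A$ is irreducible of degree $N$; in particular $\det A\ne 0$, so $ER(A)=\Q^N$ and $H^1(\Omega)=\bigcup_n A^{-n}\Z^N$ sits inside $\Q^N$. The structural observation I would exploit is that, the minimal polynomial of $A$ being irreducible of degree $N=\dim_\Q\Q^N$, the action of $A$ turns $\Q^N$ into a one-dimensional vector space over the field $\Q[A]\cong\Q(\lambda)$. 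Fixing an isomorphism $\Q^N\cong\Q(\lambda)$ carrying $A$ to multiplication by $\lambda$, I can transport $H^1(\Omega)$ and $\tau$ into $\Q(\lambda)$.

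For $\Inf(\Omega)=0$ I would argue as follows. On each stage $\Z^N$ of the direct limit the map $\tau$ is the pairing with the Perron (frequency) eigenvector, and it satisfies $\tau\circ A=\lambda\,\tau$. A $\Q$-linear functional $\bar\tau:\Q(\lambda)\to\R$ obeying $\bar\tau(\lambda z)=\lambda\,\bar\tau(z)$ is forced to have the form $\bar\tau(z)=c\,\sigma(z)$, where $\sigma:\Q(\lambda)\hookrightarrow\R$ is the real (Perron) embedding and $c=\bar\tau(1)\ne 0$. As $\sigma$ is injective, $\tau$ is injective on $H^1(\Omega)\subset\Q(\lambda)$, whence $\Inf(\Omega)=\ker\tau=0$. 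I note that this step uses only irreducibility, not the Pisot hypothesis, exactly as the statement predicts.

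For the cokernel I would first reduce. Since $\Inf(\Omega)=0$, the map $\tau:H^1(\Omega)\to\freq(\Omega)$ is an isomorphism, and Lemma~\ref{lem-2} gives $\tau\circ\theta=\imath$; with $E\subseteq\freq(\Omega)$ this identifies $\coker\theta$ with $\freq(\Omega)/E$, so the claim $\coker\theta=0$ is equivalent to $\freq(\Omega)\subseteq E$, i.e.\ every frequency is a continuous eigenvalue. To prove this I would invoke the Solomyak-type criterion that $\beta$ is an eigenvalue of the tiling flow iff $e^{2\pi\imath\beta\lambda^m x}\to 1$ for every return vector $x$, and observe that it suffices to verify this for $x$ in the $\Z$-span of the tile lengths $\ell_i$ (return vectors lie there, and finite products of sequences tending to $1$ tend to $1$). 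Writing the frequencies as $\freq_j=\rho_j/\langle\rho,\ell\rangle$ with $\rho,\ell$ the left/right $\lambda$-eigenvectors of $A$, the decisive input is the integrality
\[
\mathrm{Tr}_{\Q(\lambda)/\Q}\bigl(\lambda^m\,\ell_i\,\freq_j\bigr)=(A^m)_{ij}\in\Z ,
\]
which holds because $\ell\,\rho^{\,T}/\langle\rho,\ell\rangle$ is the spectral projection onto the $\lambda$-eigenline and the sum of all Galois-conjugate projections is the identity matrix. Since $E$ is a $\Z[\lambda,\lambda^{-1}]$-module (the inflation $\Phi$ conjugates the flow to its $\lambda$-speed-up, multiplying eigenvalues by $\lambda$), it is enough to treat $\beta=\lambda^{-p}\freq_j$. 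For $m\ge p$ the element $w_m:=\lambda^{m-p}\ell_i\freq_j$ has integral trace $(A^{m-p})_{ij}$, so $\sigma(w_m)=(A^{m-p})_{ij}-\sum_{k\ge 1}\sigma_k(w_m)$; as $\lambda^m\beta\ell_i=\sigma(w_m)$ and the Pisot hypothesis $|\lambda^{(k)}|<1$ forces $\sum_{k\ge 1}\sigma_k(w_m)\to 0$, we conclude $e^{2\pi\imath\beta\lambda^m\ell_i}\to 1$. Hence $\freq(\Omega)\subseteq E$ and $\coker\theta=0$.

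The hard part is the second assertion, and within it the spectral input that is external to the excerpt: pinning down the eigenvalue criterion with the correct module of return vectors, and then verifying that the products $\ell_i\,\freq_j$ acquire integral traces after multiplication by $\lambda^m$, uniformly enough that the argument closes over a $\Z[\lambda^{-1}]$-generating set of $\freq(\Omega)$. The Pisot hypothesis enters only at the very end, precisely to make the contracting conjugate embeddings annihilate the fractional part; for a non-Pisot irreducible $\lambda$ the criterion can fail, which is why $\coker\theta$ need not vanish in general.
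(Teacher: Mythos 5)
The paper does not actually prove this theorem --- it is quoted from \cite{AndressRobinson} without proof --- so there is no internal argument to compare yours against; judged on its own terms, your proposal is correct. For $\Inf(\Omega)=0$, the irreducibility of the characteristic polynomial makes $\Q^N$ a one-dimensional $\Q(\lambda)$-vector space, the equivariance $\tau\circ A=\lambda\,\tau$ together with the separation of the (distinct) conjugates forces $\tau=c\,\sigma$ with $\sigma$ the Perron embedding, and $c=\tau(1)\neq 0$ by positivity of frequencies, so $\tau$ is injective; equivalently, and even more quickly, $\ker\tau\cap\Q^N$ is an $A$-invariant rational subspace and hence zero by irreducibility. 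For the Pisot half, your reduction $\coker\theta\cong\freq(\Omega)/E$ via the isomorphism $\tau$ and Lemma~\ref{lem-2} is valid, and the decisive identity $\mathrm{Tr}_{\Q(\lambda)/\Q}\bigl(\lambda^m\ell_i\,\freq_j\bigr)=(A^m)_{ij}$ is correct: since $A$ has $N$ distinct eigenvalues, the spectral projection $P=\ell\rho^T/\langle\rho,\ell\rangle$ has entries in $\Q(\lambda)$, its Galois conjugates are the remaining spectral projections, and they sum to the identity, so $(A^m)_{ij}$ is exactly the trace of $\lambda^m P_{ij}$. The Pisot hypothesis then annihilates the conjugate terms, verifying the eigenvalue criterion on the $\Z$-span of the tile lengths, which contains all return vectors. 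Two points deserve explicit care in a final write-up, though neither is a gap: (i) the criterion you invoke must be the version yielding \emph{continuous} eigenvalues, since $E$ is defined through continuous eigenfunctions --- this is fine because for primitive non-periodic substitution systems measurable and continuous eigenvalues coincide (Solomyak \cite{S3}; the paper itself uses Host's characterization in exactly this way in Example~\ref{ex-3l}); (ii) the description of $\tau$ on $\dlim(\Z^N,A)$ as the pairing with the normalized left Perron--Frobenius eigenvector, which you use in both halves, is a fact from the general theory (unique ergodicity plus the Anderson--Putnam identification) and should be cited rather than treated as given, just as the paper cites it before its own example computations.
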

Note that combined with Solomyak's result (\cite{S3}) on the existence of eigenfunctions, this yields a dichotomy:
Either $\lambda$ is a Pisot number and then $\coker\theta = 0$ or 
$\lambda$ is not a Pisot number and then $E = 0$.

Hence we see that in one dimension, under the assumptions of common
prefix and of irreducibility, the two exact sequences
(\ref{exact sequence 1}) and (\ref{exact sequence 3}) are completely
degenerate. We will see below in the examples that the situation is not at all
like this if we look at non irreducible substitutions. Also, the results
on projection patterns indicate that this behavior is restricted to
one-dimensional tilings.
 
\subsection{On the splitting of exact sequence (\ref{exact sequence 3})}
For substitution tilings, there is a simple criterion guaranteeing
that the sequence (\ref{exact sequence 3})
%\begin{equation}\label{exact sequence}
% 0\to E\stackrel{\theta}\longrightarrow H^1(\Omega)\to \coker\theta\to 0
% \end{equation}
splits.  Clearly, if $F^*$ is an isomorphism in degree $1$ then
$H^1(\Omega) = H^1(Y)$ and since the latter is finitely generated we obtain from 
Cor.~\ref{cor-6} that the sequence (\ref{exact sequence 3}) splits. But we can do better.

%Note that we can view $\hat{p}^*$ as an isomorphism that conjugates the shift isomorphism $\widehat{F^*}$ on $\dlim (H^k(Y),F^*)$ with $\Phi^*$ on $H^k(\Omega)$. 
Recall that the homeomorphism $\Phi:\Omega\to \Omega$ defined by a substitution on the substitution
tiling space satisfies
$$ \Phi(T - v) = \Phi(T)-\Lambda(v).$$
Let $f$ be an eigenfunction with eigenvalue $\beta\in E$. Then
$$ f(\Phi(T-v)) = e^{2\pi\imath \beta(\Lambda(v))} f(\Phi(T)),$$
showing that $f\circ\Phi$ is an eigenfunction with eigenvalue
$\Lambda^T\beta$. It follows that $\Phi^*\theta(\beta) = [f\circ\Phi]
= \theta(\Lambda^T\beta)$, i.e.\ $\theta$ intertwines the action of
$\Lambda^T$ on $E$ with that of $\Phi^*$ on $H^1(\Omega)$. $\Phi^*$
thus 
induces a homomorphism $\bar\Phi^*$ on $\coker\theta$.
We now work with rational coefficients, i.e.\ rational cohomology. Then, since $H^1(Y;\Q)$ is a finite dimensional vector space, also $H^1(\Omega;\Q)$ and thus $\coker_\Q\theta= H^1(\Omega;\Q)/\theta(E\otimes_\Z\Q)$ are finite dimensional and 
so we can view $\bar\Phi^*$ as a finite matrix with rational coefficients which we denote by $\bar A$. Of course, this matrix depends on a choice of basis, but not its determinant.

\begin{prop}\label{split cohomology}
Suppose that $\bar{A}$ is as above and that $\det(\bar{A})=\pm 1$. Then the sequence (\ref{exact sequence 3}) splits with
${H}^1(\Omega)\cong E\oplus\Z^l$, $l=\dim\coker_\Q\theta$.
\end{prop}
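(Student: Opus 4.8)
We want to show that if $\bar A = \bar\Phi^*$ on $\coker_\Q\theta$ has determinant $\pm 1$, then the sequence (\ref{exact sequence 3}) splits integrally, with $H^1(\Omega)\cong E\oplus\Z^l$ where $l=\dim\coker_\Q\theta$. Since $\coker\theta$ is torsion free by Theorem~\ref{coker torsion free}, the only obstruction to splitting is that $\coker\theta$ might be a non-finitely-generated torsion-free group (if it were finitely generated we would be done by Cor.~\ref{cor-6}, since a finitely generated torsion-free abelian group is free and hence extensions by it split). So the whole content of the proposition is to use the hypothesis $\det\bar A=\pm1$ to pin down $\coker\theta$ as a finitely generated free group of rank $l$.

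First I would set up the direct-limit picture. Recall $H^1(\Omega;\Q)=\dlim(H^1(Y;\Q),F^*)$, a finite dimensional $\Q$-vector space, and $\theta(E\otimes\Q)$ sits inside it as the eigenspace data. The map $\Phi^*$ acts on $H^1(\Omega)$ as the shift-induced automorphism of the direct limit, and by the intertwining $\theta\circ\Lambda^T=\Phi^*\circ\theta$ established just above, $\Phi^*$ preserves $\theta(E)$ and induces $\bar\Phi^*$ on the quotient $\coker\theta$. The key structural point is that over $\Q$, $\Phi^*$ is an isomorphism of $H^1(\Omega;\Q)$ (the shift on a direct limit of a stationary system, restricted to the eventual range, is invertible), hence $\bar A$ is an automorphism of the finite dimensional space $\coker_\Q\theta$.

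The main step, and the main obstacle, is to descend from $\Q$ to $\Z$. The integral cohomology $H^1(\Omega)=\dlim(\Z^N,A)=\bigcup_n\tilde A^{-n}\Sigma$ is an increasing union of copies of $\Sigma\cong\Z^{\rk}$, and $\coker\theta$ is correspondingly an increasing union $\bigcup_n C_n$ where $C_n$ is the image of the $n$-th stage. Each $C_n$ is finitely generated torsion free, hence free, of rank at most $l$; the inclusions $C_n\hookrightarrow C_{n+1}$ are, up to the identification by the shift, governed by the matrix $\bar A$ (viewed integrally on a fixed finite-rank lattice). The hypothesis $\det\bar A=\pm1$ is exactly what forces these inclusions to be \emph{isomorphisms} rather than proper finite-index or infinite-index inclusions: an integer endomorphism of $\Z^l$ with determinant $\pm1$ is invertible over $\Z$, so the transition maps stabilize and the union $\bigcup_n C_n$ equals $C_{n_0}$ for some $n_0$. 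Thus $\coker\theta$ is finitely generated of rank $l$, torsion free by Theorem~\ref{coker torsion free}, hence free, $\coker\theta\cong\Z^l$.

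Once $\coker\theta\cong\Z^l$ is established, the conclusion is immediate: any short exact sequence $0\to E\to H^1(\Omega)\to\Z^l\to 0$ of abelian groups splits because $\Z^l$ is free (projective), giving $H^1(\Omega)\cong E\oplus\Z^l$ as claimed. The delicate point to get right in the full write-up will be the identification of the integral transition maps on $\coker\theta$ with the single matrix $\bar A$ so that the determinant hypothesis genuinely controls all the inclusions simultaneously; this requires carefully choosing a lattice in $\coker_\Q\theta$ on which $\bar\Phi^*$ acts and checking that $\theta(E)$ is saturated enough that the integral quotients embed compatibly.
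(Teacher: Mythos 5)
Your argument is correct, and it reaches the splitting by a genuinely different mechanism than the paper's proof, although the two share the same foundation. The common core: writing $H^1(\Omega)\cong ER_\Z=\bigcup_n \tilde A^{-n}\Sigma$ with $\Sigma=\Z^N\cap ER(A)$, letting $V$ be the rational span of $\theta(E)$ and $W=ER(A)/V$, the lattice $\Gamma:=\pi_W(\Sigma)$ is a full lattice in $W$ (as $V$ is a rational subspace), it is forward invariant under $\bar A$ (because $A\Sigma\subseteq\Sigma$), and $\det\bar A=\pm1$ then makes $\bar A|_\Gamma$ an automorphism of $\Gamma\cong\Z^l$; moreover the saturation point you flag, namely $\theta(E)=V\cap ER_\Z$, is exactly what Theorem~\ref{coker torsion free} delivers (if $x\in V\cap ER_\Z$ then $px\in\theta(E)$ for some $p\neq 0$, so $x$ would otherwise give torsion in $\coker\theta$), and it is what makes $\coker\theta$ embed into $W$ via $\pi_W$. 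Your ``fixed lattice on which the transition maps are $\bar A$'' is then simply $\Gamma$: the image of the $n$-th stage is $\pi_W(\tilde A^{-n}\Sigma)=\bar A^{-n}\Gamma$, so $\coker\theta\cong\bigcup_n\bar A^{-n}\Gamma=\Gamma\cong\Z^l$ (as $\bar A\Gamma=\Gamma$), and the splitting follows from freeness of the cokernel, i.e.\ your appeal to Corollary~\ref{cor-6}. The paper proceeds differently from the common core: it fixes a rational splitting $s':W\to ER(A)$, takes $\pi_V$ to be the projection onto $V$ with kernel $s'(W)$, and proves $\pi_V(ER_\Z)=V_\Z$, thus constructing an explicit retraction onto $\theta(E)$. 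What your route buys: the isomorphism $\coker\theta\cong\Z^l$ asserted in the statement comes out directly rather than as an afterthought, and no choice of complement is needed --- a real advantage, since the paper's final step ($A^n\pi_V(x)=A^n x - A^n\gamma'\in\Sigma$) tacitly requires $s'$ to be normalized so that $s'(\Gamma)\subseteq\Sigma$ (always arrangeable by lifting a $\Z$-basis of $\Gamma$ into $\Sigma$, but not stated there). What the paper's route buys is a concrete splitting homomorphism rather than an abstract one. The details you defer are therefore routine with the tools you cite: take $\Gamma=\pi_W(\Sigma)$ as the lattice, and get saturation from torsion-freeness of the cokernel.
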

\begin{proof} Recall from above that we may identify  
$H^1(Y)=\Z^N$ and $F^*=A$ so that $H^1(\Omega) \cong \{v\in ER(A):\exists n A^n v\in\Sigma\}$ where $\Sigma = \Z^N\cap ER(A)$. We now denote  $ER_\Z:= \{v\in ER(A):\exists n A^n v\in\Sigma\}$ and by
$V_\Z\subset ER_\Z$ the subgroup corresponding to
$\theta(E)$ under the above isomorphism. Then (\ref{exact sequence 3}) can be identified with 
\begin{equation}\label{eq-split} 0 \to V_\Z \hookrightarrow ER_\Z \to ER_\Z/V_\Z \to 0
\end{equation}
and our aim is to show that there is a splitting map $s:ER_\Z \to V_\Z$.
%Since $\coker\theta$ is torsion free $H^1(\Omega)$ must then split as stated.

Note that $H^1(\Omega;\Q) \cong ER(A)$ and let denote $V$  the subspace corresponding to
$\theta(E\otimes_\Z \Q)$ under this isomorphism. $V$ is the rational
span of $V_\Z$. 
Then 
$\bar{A}$ can be seen as the linear map induced on $W=ER(A)/V$ by $A$.
Let $\pi_W$ denote the natural projection of $ER(A)$ onto $W$ and let $\Gamma:=\pi_W(\Sigma)$. Then $\Sigma$ and $\Gamma$ are forward invariant under $A$ and $\bar{A}$, resp., and it follows from $\det(\bar{A})=\pm1$ that $\bar{A}$ restricts to an isomorphism of $\Gamma$. Working with rational vector spaces, the corresponding exact sequence
$0 \to V \to ER(A) \to W \to 0$ splits and there is a linear map $s':W\to ER(A)$ such that $\pi_W\circ s' = \mbox{\rm id}$. Let $\pi_V:ER(A) \to V$ be the projection onto $V$ with kernel $s'(W)$. We claim that $\pi_V(ER_\Z) = V_\Z$ which then shows that the restriction of $\pi_V$ to $ER_\Z$ is a splitting map $s$ for the sequence (\ref{eq-split}).

By Theorem~\ref{coker torsion free}, $ER_\Z/V_\Z$ is torsion free and hence $V_\Z = V\cap ER_\Z$. Indeed, if $x\in V\cap ER_\Z$, there is $p$ such that $px\in V_\Z$ and thus, if $x \notin V_\Z$, it would map to a $p$-torsion element in the quotient $ER_\Z/V_\Z$. So we need only to show that $\pi_V(ER_\Z) \subset ER_\Z$.

Let $x \in ER_\Z$, i.e.\ $x\in ER(A)$ and there is $n$ such that $A^nx\in\Sigma$. We have $A^nx = v + \gamma$ with $v=\pi_V(A^nx)\in V$ and $\gamma = s'\circ \pi_W(A^nx) \in s'(\Gamma)$. Hence $\tilde A^{-n}A^n x = \tilde A^{-n}v + \tilde A^{-n}\gamma \in V+s'(\Gamma)$ as $\tilde A$, the restriction of $A$ to $ER(A)$,
is an isomorphism of $ER(A)$  preserving $V$ and inducing an isomorphism $\bar A$ on $\Gamma$.
So we may write $x = v'+\gamma'$ with $v'\in V$ and $\gamma'\in s'(\Gamma)$. Then $v' = \pi_V(x)$ and so we get $A^n\pi_V(x) = A^n(x) - A^n\gamma'\in\Sigma$.    
\end{proof}
 
By a result of \cite{KS} (see also \cite{BG}), the collection of eigenvalues %, $spec(A|_V)$, 
of the linear transformation $A|_V$ equals the collection of all algebraic conjugates of the eigenvalues of the linear inflation $\Lambda$. Moreover, the multiplicity of any $\lambda$ as an eigenvalue of $\Lambda$ is no larger than the multiplicity of $\lambda$ as an eigenvalue of $A$.  A non-unit determinant of $\bar{A}$ implies the existence of a non-unit (and nonzero) eigenvalue of $A$ that has multiplicity greater than its multiplicity as an eigenvalue of $\Lambda$ (see example \ref{ex-3l}).

\begin{cor}\label{for split cohomology} Suppose that every eigenvalue
  of $A$ that is not an algebraic unit has the same multiplicity as an
  eigenvalue of $\Lambda$ as it does as an eigenvalue of $A$. Then the
  sequence (\ref{exact sequence 3}) splits, as in Proposition \ref{split
    cohomology}. 
\end{cor}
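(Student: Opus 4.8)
The plan is to check that the hypothesis of Proposition~\ref{split cohomology}, namely $\det(\bar A)=\pm1$, is forced by the stated assumption on multiplicities; the splitting of (\ref{exact sequence 3}) then follows at once from that proposition. Recall that $\theta$ intertwines $\Lambda^T$ on $E$ with $\Phi^*=\tilde A$ on $H^1(\Omega;\Q)\cong ER(A)$, so the subspace $V$ corresponding to $\theta(E\otimes_\Z\Q)$ is invariant under $\tilde A$. Consequently the characteristic polynomial of $\tilde A$ is the product of those of $A|_V$ and of the induced map $\bar A$ on $W=ER(A)/V$, while the eigenvalues of $\tilde A$, counted with multiplicity, are exactly the nonzero eigenvalues of $A$. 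Thus, writing $m_A(\mu)$ and $m_V(\mu)$ for the multiplicities of a nonzero $\mu$ as an eigenvalue of $A$ and of $A|_V$ respectively, the multiplicity of $\mu$ as an eigenvalue of $\bar A$ is $m_A(\mu)-m_V(\mu)$.

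Next I would feed in the result of \cite{KS} quoted above: the eigenvalues of $A|_V$, with multiplicity, are the algebraic conjugates of the eigenvalues of $\Lambda$, and each such $\mu$ satisfies $m_V(\mu)=m_\Lambda(\mu)\le m_A(\mu)$, where $m_\Lambda(\mu)$ is the multiplicity of $\mu$ as an eigenvalue of $\Lambda$. Combined with the factorization of the preceding paragraph, the multiplicity of a nonzero $\mu$ as an eigenvalue of $\bar A$ equals $m_A(\mu)-m_\Lambda(\mu)\ge0$. In particular $\mu$ occurs in $\bar A$ with positive multiplicity precisely when $m_A(\mu)>m_\Lambda(\mu)$, and every eigenvalue of $\bar A$ is a (nonzero) eigenvalue of the integer matrix $A$, hence an algebraic integer.

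The decisive step is arithmetic, and it is essentially the contrapositive of the remark preceding the corollary. Since $\bar A$ preserves the full lattice $\Gamma=\pi_W(\Sigma)$ and $\tilde A$ is invertible, $\det(\bar A)$ is a nonzero rational integer, equal to the product $\prod_j\mu_j$ of the eigenvalues $\mu_j$ of $\bar A$ with multiplicity. If this product is $\pm1$, then for each $j$ the identity $\mu_j\cdot\bigl(\pm\prod_{i\ne j}\mu_i\bigr)=1$ exhibits the inverse of $\mu_j$ as an algebraic integer, so each $\mu_j$ is an algebraic unit; conversely, a product of units that is a rational integer must be $\pm1$. Hence $\det(\bar A)=\pm1$ if and only if every eigenvalue of $\bar A$ is a unit, i.e.\ no nonzero non-unit eigenvalue $\mu$ of $A$ satisfies $m_A(\mu)>m_\Lambda(\mu)$. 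This is exactly the hypothesis of the corollary (the only eigenvalues that can occur in $\bar A$ are nonzero), so $\det(\bar A)=\pm1$ and Proposition~\ref{split cohomology} yields the splitting with ${H}^1(\Omega)\cong E\oplus\Z^l$.

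The only real obstacle is the bookkeeping of multiplicities: one must be certain that \cite{KS} gives an equality of multisets of eigenvalues of $A|_V$ with the conjugates of the eigenvalues of $\Lambda$ (not merely of underlying sets), so that the difference $m_A(\mu)-m_\Lambda(\mu)$ genuinely records the multiplicity of $\mu$ in $\bar A$; together with the torsion-freeness of $\coker\theta$ from Theorem~\ref{coker torsion free}, which underlies the identification of $V_\Z$ with $V\cap ER_\Z$ used in Proposition~\ref{split cohomology}, this is what makes the eigenvalue count line up. The remaining divisibility argument, turning ``a product of algebraic integers equal to $\pm1$'' into ``each factor is a unit,'' is routine once the factorization through $V$ is in place.
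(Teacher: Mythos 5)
Your overall route is the same as the paper's: there the corollary is deduced, without a separate proof, from the remark immediately preceding it (a non-unit determinant of $\bar A$ forces a non-unit, nonzero eigenvalue of $A$ whose multiplicity in $A$ exceeds its multiplicity in $\Lambda$), taken in contrapositive form and fed into Proposition \ref{split cohomology}. You have essentially written that remark out: $\det(\bar A)$ is a nonzero rational integer because $\bar A$ preserves the full lattice $\Gamma=\pi_W(\Sigma)$ and $\tilde A$ is invertible; the characteristic polynomial of $\tilde A$ factors as that of $A|_V$ times that of $\bar A$; and a rational integer that is a product of algebraic integers is $\pm 1$ exactly when every factor is an algebraic unit. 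All of this is correct and is exactly what the paper intends.

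One step of your multiplicity bookkeeping is, however, false as stated: the equality $m_V(\mu)=m_\Lambda(\mu)$, and with it your formula $m_{\bar A}(\mu)=m_A(\mu)-m_\Lambda(\mu)$ and the claim that $\mu$ occurs in $\bar A$ \emph{precisely} when $m_A(\mu)>m_\Lambda(\mu)$. The result of \cite{KS}, \cite{BG} says the eigenvalues of $A|_V$ are the algebraic conjugates of the eigenvalues of $\Lambda$; a proper conjugate $\mu$ of an eigenvalue of $\Lambda$ therefore has $m_V(\mu)\geq 1$ while $m_\Lambda(\mu)=0$ (for the Fibonacci substitution, $\Lambda=(\phi)$ while $A|_V=A$ has the eigenvalue $-1/\phi$), so even the multiset form of the quoted result, which you flag as the point to verify, does not yield your equality. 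Fortunately your argument only uses one direction: if $\mu$ is a non-unit eigenvalue of $\bar A$, then $m_A(\mu)=m_V(\mu)+m_{\bar A}(\mu)\geq m_\Lambda(\mu)+1$, and this needs only the inequality $m_V(\mu)\geq m_\Lambda(\mu)$ --- trivial when $m_\Lambda(\mu)=0$, and supplied by the multiset form of \cite{KS}/\cite{BG} when $\mu$ is an eigenvalue of $\Lambda$. With the equality weakened to this inequality, every non-unit eigenvalue of $\bar A$ would contradict the hypothesis of the corollary, so all eigenvalues of $\bar A$ are units, $\det(\bar A)=\pm 1$, and Proposition \ref{split cohomology} gives the splitting. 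In short: right approach, the same as the paper's, but replace the overstated equality (and the ``precisely when'') by the one-sided inequality that the argument actually requires.
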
 

%%%%%%%%%%%%%%%
\subsection{Examples of non-irreducible substitutions}
We present one-dimensional substitution tilings for which the
sequences (\ref{exact sequence 1}) and/or (\ref{exact sequence 3}) do
not split. 

Note that 
by Lemma~\ref{lem-2} the group of infinitesimal elements
$\Inf(\Omega)$ necessarily has trivial
intersection with the image of $\theta$ and so the sum
$\Inf(\Omega)+\im\theta$ is direct. 
\subsubsection{Non-splitting example}\label{ex-3l}
We consider the following substitution
$$a\mapsto abb, \quad b\mapsto aaa,$$
whose letters have equal length when realized as tiles of a one-dimensional tiling. We fix this length to be $1$ and so this fixes the action of $\R$ on the continuous hull $\Omega$.
The expansion matrix is $\Lambda=(3)$. By Host's characterization of the eigenvalues, $\beta\in E$ if and only if $\beta 3^n\: mod \:1\stackrel{n\to\infty}\longrightarrow 0$ which is clearly only possible if $\beta 3^n\in\Z$ for some $n$ and hence $E=\Z[1/3]$. It is not difficult to write down an eigenfunction $f_1$ for $\beta = 1$: take any $T_0\in\Omega$ and define $f_1(T_0-t) = \exp(2\pi\imath t)$. Given that all tiles have length $1$ the value of $f_1(T_0-t)$ depends only on the relative position of the tile on $0$ ($f$ is strongly pattern equivariant) and hence $f$ extends by continuity to $\Omega$.  
But the substitution is non-periodic and primitive and hence recognizable. This means that one can
recognize the three-letter words in any $T\in\Omega$ which arrise from a substitution of a letter
and hence also $f_{3^{-n}}(T_0-t) = \exp(2\pi\imath 3^{-n}t)$ is strongly pattern equivariant and so extends. In particular, and as it should be, $[f_{3^{-n}}] = [f_{1}\circ \Phi^n]$. 

Note that the substitution has a common prefix. 
As explained above, the first cohomology group of $\Omega$ is
therefore given by the direct limit defined by the transpose of
the incidence matrix of the substitution, which is 
\[A=\begin{pmatrix}
1 & 2 \\
3 & 0 \\
\end{pmatrix}.\]
Since this matrix is invertible over the rationals we obtain 
\begin{eqnarray*}
H^1(\Omega) \cong \bigcup_{n\in\N} A^{-n}\Z^2 &=& 
%\bigcup_{n\in\N} \frac1{6^n}\begin{pmatrix}
%0 & 2 \\
%3 & -1 \\
%\end{pmatrix}^{n}\Z^2 =
\bigcup_{n\in\N} \frac15\begin{pmatrix}
1 & -2 \\
1 & 3 \\
\end{pmatrix}
\begin{pmatrix}
3^{-n} & 0 \\
0 & (-2)^{-n} \\
\end{pmatrix}
\begin{pmatrix}3 & 2 \\
-1 & 1 \\
\end{pmatrix}
\Z^2 \\
& = &\Z[1/3]
\begin{pmatrix}
1 \\
1 \\
\end{pmatrix} +
\Z[1/2]\begin{pmatrix}
-2 \\
3 
\end{pmatrix} + \bigcup_{k=1}^4 %\bigcup_{n\in\N} A^{-n}
\begin{pmatrix}k  \\ 0 \\
\end{pmatrix}
.\end{eqnarray*}
Taking into account the isomorphism between
$[\Omega,S^1]$ and $H^1(\Omega)$ it is easily seen that $[f_{1}]$
corresponds to 
the element $\begin{pmatrix} 1 \\ 1 \\ \end{pmatrix}$ and consequently
$[f_{3^{-n}}]$  
is represented as $A^{-n}\begin{pmatrix} 1 \\ 1 \\ \end{pmatrix}=\frac1{3^n}\begin{pmatrix} 1 \\ 1 \\ \end{pmatrix}$. Thus $$\theta(E)=\Z[1/3] \begin{pmatrix} 1 \\1 \\ \end{pmatrix}.$$
% Note that $\begin{pmatrix}
%  -2 \\
%  3 \\
%  \end{pmatrix}$ is an eigenvector of $A$ with eigenvalue $-2$. Hence $\Z[1/2] \begin{pmatrix}
%  -2 \\
%  3 \\
%  \end{pmatrix}$ is also contained in $\bigcup_{n\in\N} A^{-n}\Z^2$ and we may write
%  \[\bigcup_{n\in\N} A^{-n}\Z^2 = \Z[1/3]
% \begin{pmatrix}
% 1 \\
% 1 \\
% \end{pmatrix} +
% \Z[1/2]\begin{pmatrix}
% -2 \\
% 3 
% \end{pmatrix} + \bigcup_{k=1}^4\bigcup_{n\in\N} A^{-n}
% \begin{pmatrix}k  \\ 0 \\
% \end{pmatrix} .\]
We now consider the map $\tau$ which, according to the
general theory \cite{AndressRobinson} is given by the pairing
$\tau(x) = \langle\nu,x\rangle$ with the left-Perron Frobenius
eigenvector $\nu$ of $A$ normalized to $\nu_1+\nu_2 = 1$. This is $\nu
= \frac15(3,2)$.  So
\[ \tau ( A^{-n} v ) = \frac15 3^{-n} (3v_1+2v_2).\]
It follows that $$\freq(\Omega)=\im\tau = \frac15\Z[1/3]$$ and 
$$\Inf(\Omega) = \ker\tau =  \Z[1/2] \begin{pmatrix}
 -2 \\
 3 \\
 \end{pmatrix}.$$
In particular, $\im\theta$ and $\Inf(\Omega)$ are subgroups of
$H^1(\Omega)$ with trivial intersection 
but they generate only a sub-group of index $5$ in $H^1(\Omega)$.
%, namely $H^1(\Omega)/(\im\theta + \Inf(\Omega)) = \Z_5$.
Likewise, 
$\tau\circ\theta(E)=\Z[1/3]$ is a subgroup of index $5$ in $\im\tau$.  
%%%%
\begin{prop} Neither of the exact sequences 
(\ref{exact sequence 1}) or (\ref{exact sequence 3}) splits.
\end{prop}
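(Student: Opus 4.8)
The plan is to prove non-splitting for each sequence by contradiction: assume a splitting, which is equivalent to the existence of a retraction onto the sub-object, and then contradict it using the explicit description of $H^1(\Omega)$ and its subgroups established above, via a single element witnessing the index-$5$ gluing.

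First I would record the structural facts to be exploited, all computed above: $\im\theta=\Z[1/3]\begin{pmatrix}1\\1\end{pmatrix}$ with $\theta(1)=\begin{pmatrix}1\\1\end{pmatrix}$; $\Inf(\Omega)=\Z[1/2]\begin{pmatrix}-2\\3\end{pmatrix}$ with generator $w:=\begin{pmatrix}-2\\3\end{pmatrix}$; and $E\cong\Z[1/3]$, $\freq(\Omega)\cong\frac{1}{5}\Z[1/3]$. The elementary input I would isolate is that $\mathrm{Hom}(\Z[1/2],\Z[1/3])=0$ and $\mathrm{Hom}(\Z[1/3],\Z[1/2])=0$: a homomorphism out of $\Z[1/p]$ has $p$-divisible image, whereas a nonzero element $a/q^{k}$ of $\Z[1/q]$ with $p\neq q$ has finite $p$-adic valuation and hence is not infinitely $p$-divisible, so the image must be $0$.

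The crux will be the element $g:=\begin{pmatrix}0\\1\end{pmatrix}\in\Z^2\subseteq H^1(\Omega)$, which satisfies the single relation $5g=\begin{pmatrix}0\\5\end{pmatrix}=2\begin{pmatrix}1\\1\end{pmatrix}+\begin{pmatrix}-2\\3\end{pmatrix}=2\,\theta(1)+w$ in $H^1(\Omega)$; the coefficient $2$ being coprime to $5$ is exactly what encodes the nontrivial gluing of $\im\theta\oplus\Inf(\Omega)$ as a subgroup of index $5$. For (\ref{exact sequence 3}) a splitting yields a retraction $r:H^1(\Omega)\to E$ with $r\circ\theta=\mathrm{id}$; by the $\mathrm{Hom}$-vanishing $r$ annihilates $\Inf(\Omega)$, so $r(w)=0$ and $5\,r(g)=r(5g)=2\,r(\theta(1))=2$, which has no solution $r(g)\in E\cong\Z[1/3]$ because $2/5\notin\Z[1/3]$. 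For (\ref{exact sequence 1}) a splitting yields a retraction $r':H^1(\Omega)\to\Inf(\Omega)$ restricting to the identity on $\Inf(\Omega)$; by $\mathrm{Hom}$-vanishing $r'$ annihilates $\im\theta$, so $r'(\theta(1))=0$ and $5\,r'(g)=r'(5g)=w$, which has no solution $r'(g)\in\Inf(\Omega)=\Z[1/2]\,w$ because $w$ is not $5$-divisible there. Each contradiction establishes the claim.

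I expect the main obstacle to be conceptual rather than computational: isolating the element $g$ together with the relation $5g=2\,\theta(1)+w$ that makes the index-$5$ overlap obstruct splitting. Once this relation and the triviality of $\mathrm{Hom}$ between $\Z[1/2]$ and $\Z[1/3]$ are in hand, both contradictions are immediate; the only secondary point to check carefully is that a splitting retraction must genuinely kill the complementary rank-one subgroup, which is precisely where the $\mathrm{Hom}$-vanishing enters.
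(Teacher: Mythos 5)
Your proposal is correct, and while it shares the paper's overall strategy --- assume a retraction exists and derive a divisibility contradiction --- its execution is genuinely different and more elementary. The paper extends the hypothetical retraction $s$ to a $\Q$-linear functional on $H^1(\Omega)\otimes_\Z\Q\cong\Q^2$, represents it as $v\mapsto\langle w,v\rangle$ for some $w\in\Q^2$, uses infinite $2$-divisibility of $\Inf(\Omega)$ to force $w=t\,(3,2)^T$ with $t\in\Z[1/3]$, and finally contradicts $5tx=1$ with $t,x\in\Z[1/3]$ (i.e.\ $5$ divides a power of $3$); for sequence (\ref{exact sequence 1}) it only remarks that one should interchange the roles of the eigenvectors. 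You instead isolate the two ingredients in purer form: the vanishing $\mathrm{Hom}(\Z[1/2],\Z[1/3])=\mathrm{Hom}(\Z[1/3],\Z[1/2])=0$ --- which is exactly the content of the paper's divisibility step, and which your sketch proves correctly --- and the single relation $5g=2\,\theta(1)+w$ for $g=(0,1)^T$, $w=(-2,3)^T$, which is precisely where the index-$5$ overlap of $\im\theta\oplus\Inf(\Omega)$ inside $H^1(\Omega)$ enters. Evaluating the retraction on that one relation gives $5\,r(g)=2$ in $E\cong\Z[1/3]$ for sequence (\ref{exact sequence 3}), respectively $5\,r'(g)=w$ in $\Inf(\Omega)=\Z[1/2]\,w$ for sequence (\ref{exact sequence 1}), both impossible since $2/5\notin\Z[1/3]$ and $1/5\notin\Z[1/2]$. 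What your route buys: no rationalization or dual-vector computation is needed, the two sequences are handled symmetrically by the same witness element, and the proof makes transparent that the obstruction is exactly the index-$5$ gluing computed just before the proposition rather than an artifact of a linear-algebra calculation. What the paper's route buys: the functional $w$ is produced mechanically, with no need to find a clever witness element and relation.
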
 
%%%%%
\begin{proof} Suppose the sequence (\ref{exact sequence 3})
splits. There is then a $\Z$-module homomorphism $s:\bigcup_{n\in\N}
A^{-n}\Z^2\to E$ with $s\circ \theta=id$. This $s$ extends to a
$\Q$-linear map $s:\Q^2\cong\bigcup_{n\in\N}
A^{-n}\Z^2\otimes_\Z\Q\to\Q\cong\Z[1/3] \otimes_\Z\Q$. There is then a
$w\in\Q^2$ so that $s(v)=\langle w,v\rangle$ for all $v\in\Q^2$. 

Since each element of $\Z[1/2] \begin{pmatrix}
 -2 \\
 3 \\
 \end{pmatrix}$ is infinitely divisible by $2$ in $\bigcup_{n\in\N} A^{-n}\Z^2$ also $s(v)=\langle w,v\rangle$ is infinitely divisible by $2$ in $\Z[1/3]$ for each $v\in \Z[1/2] \begin{pmatrix}
 -2 \\
 3 \\
 \end{pmatrix}$. This means that $w$ is orthogonal to $\Z[1/2] \begin{pmatrix}
 -2 \\
 3 \\
 \end{pmatrix}$, say $w=t \begin{pmatrix}
 3 \\
 2 \\
 \end{pmatrix}$ with $t\in\Q$. Then $s(\begin{pmatrix}
 1 \\
 0 \\
 \end{pmatrix}) = \langle t \begin{pmatrix}
 3 \\
 2 \\
 \end{pmatrix},\begin{pmatrix}
 1 \\
 0 \\
 \end{pmatrix}\rangle=3t$ must lie in $\Z[1/3]$; that is, $t\in\Z[1/3]$. Let $t=t_0/3^{n_0}$ with $t_0,n_0\in\Z$. Now the restriction of $s$ to $\theta(E)=\Z[1/3] 
 \begin{pmatrix}1 \\1  \end{pmatrix}$ is surjective (since $s\circ\theta=id$) so there is an $x\in\Z[1/3] \begin{pmatrix}1 \\1  \end{pmatrix}$, say $x=x_0/3^{m_0} \begin{pmatrix}1 \\1  \end{pmatrix}$, with $x_0,m_0\in\Z$, so that $s(x \begin{pmatrix}
 1 \\
 1 \\
 \end{pmatrix})=\langle t \begin{pmatrix}
 3 \\
 2 \\
 \end{pmatrix},x\begin{pmatrix}
 1 \\
 1 \\
 \end{pmatrix}\rangle=(t_0/3^{n_0})(x_0/3^{m_0})5=1$. But this implies that $5$ divides $3$.
 
 The argument for the sequence (\ref{exact sequence 1}) is completely similar, one only has to interchange the role of the eigenvectors of $A$.
\end{proof}

\subsubsection{Period doubling}\label{period doubling}
We consider the following substitution
$$a\mapsto ab, \quad b\mapsto aa,$$
whose letters have equal length when realized as tiles of a one-dimensional tiling. We fix this length to be $1$ and so this fixes the action of $\R$ on the continuous hull $\Omega$.
The expansion matrix is $\Lambda=(2)$. As above one sees that  $E=\Z[1/2]$ and that $f_{2^{-n}}(T_0-t) = \exp(2\pi\imath 2^{-n}t)$ is strongly pattern equivariant and so extends to an eigenfunction to eigenvalue $2^{-n}$. 

The transpose of  the incidence matrix of the substitution is 
\[A=\begin{pmatrix}
1 & 1 \\
2 & 0 \\
\end{pmatrix}.\] 
Since the substitution has a common prefix  we get
\begin{eqnarray*} 
H^1(\Omega) \cong \bigcup_{n\in\N} A^{-n}\Z^2 &=& \bigcup_{n\in\N} \frac13\begin{pmatrix}
1 & -1 \\
1 & 2 \\
\end{pmatrix}
\begin{pmatrix}
2^{-n} & 0 \\
0 & (-1)^{-n} \\
\end{pmatrix}
\begin{pmatrix} 2 & 1 \\
-1 & 1 \\
\end{pmatrix}
\Z^2\\ 
&=& \Z[1/2]
\begin{pmatrix}
1 \\
1 \\
\end{pmatrix} +
\Z\begin{pmatrix}
-1 \\
2 
\end{pmatrix} + \bigcup_{k=0}^2%\bigcup_{n\in\N} A^{-n}
\begin{pmatrix}k  \\ 0 \\
\end{pmatrix}.
\end{eqnarray*}
%The last equality comes about as $A^{-1}\begin{pmatrix} 1 \\0 \end{pmatrix} + \begin{pmatrix} 1 \\0 \end{pmatrix} = \begin{pmatrix} 1 \\1 \end{pmatrix}$.

As above one sees that  $[f_{2^{-n}}]$ 
is represented as $A^{-n}\begin{pmatrix} 1 \\ 1 \\ \end{pmatrix}=\frac1{2^n}\begin{pmatrix} 1 \\ 1 \\ \end{pmatrix}$. Thus $$\theta(E)=\Z[1/2] \begin{pmatrix} 1 \\1 \\ \end{pmatrix}.$$
We now consider the map $\tau$.  The left-Perron Frobenius eigenvector of $A$ normalized to $\nu_1+\nu_2 = 1$ is $\nu = \frac13(2,1)$. So
%\footnote{See last footnote. I have checked the calculation with left collared tiles getting the same result.}  
\[ \tau ( A^{-n} v ) = \frac13 2^{-n} (2v_1+1v_2).\]
It follows that 
$$\freq(\Omega) = \frac13\Z[1/2]$$ and $$\Inf(\Omega) =  \Z \begin{pmatrix}
 -1 \\
 2 \\
 \end{pmatrix}.$$ Hence $\im\theta+\Inf(\Omega)$ is a subgroup of
 index $3$ in $H^1(\Omega)$.
\begin{prop} The exact sequence (\ref{exact sequence 3}) splits and hence
$H^1(\Omega) \cong \Z[1/2] \oplus \Z$. This splitting does
not respect the order as the infinitesimal elements form an index $3$
subgroup of the second summand.
\end{prop}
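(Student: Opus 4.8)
The plan is to read off the splitting from the general criterion already established, and then to locate the infinitesimal subgroup inside the resulting complement by a short index computation. First I would compute the eigenvalues of the matrix $A$ of this example: its characteristic polynomial is $x^2-x-2=(x-2)(x+1)$, so its eigenvalues are $2$ and $-1$. The expansion $\Lambda=(2)$ has the single eigenvalue $2$, and the only eigenvalue of $A$ that is not an algebraic unit is $2$, whose multiplicity as an eigenvalue of $A$ is $1$, exactly matching its multiplicity as an eigenvalue of $\Lambda$. Hence the hypothesis of Corollary \ref{for split cohomology} holds and the sequence (\ref{exact sequence 3}) splits. In the notation of Proposition \ref{split cohomology} this amounts to observing that $V=\theta(E\otimes_\Z\Q)$ is the $2$-eigenline spanned by $(1,1)^t$, that the induced map $\bar A$ on the one-dimensional quotient $W=ER(A)/V$ is multiplication by the remaining eigenvalue $-1$, and that $\det\bar A=-1=\pm 1$. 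Since $l=\dim\coker_\Q\theta=2-1=1$, we obtain $H^1(\Omega)\cong E\oplus\Z=\Z[1/2]\oplus\Z$.

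For the second assertion I would fix such a splitting $H^1(\Omega)=\theta(E)\oplus C$ with $C\cong\Z$ and determine the position of $\Inf(\Omega)$ in $C$. By Lemma \ref{lem-2} (cf.\ the remark preceding Example \ref{ex-3l}) one has $\Inf(\Omega)\cap\theta(E)=0$, so the projection onto the second summand maps $\Inf(\Omega)$ injectively into $C$; equivalently, under the identification $C\cong\coker\theta=H^1(\Omega)/\theta(E)$ the image of $\Inf(\Omega)$ is $(\Inf(\Omega)+\theta(E))/\theta(E)\cong\Inf(\Omega)$. The index of this image in $C$ equals the index of $\theta(E)+\Inf(\Omega)$ in $H^1(\Omega)$, which was shown above to be $3$. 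Thus $\Inf(\Omega)$ sits inside the second summand $C$ as a subgroup of index $3$, and in particular $\Inf(\Omega)\neq C$.

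Finally I would spell out why this says the splitting does not respect the order. The order on $H^1(\Omega)=H^n(\Omega)$ (with $n=1$) in top degree is governed by $\tau$, with $\ker\tau=\Inf(\Omega)$; a splitting respecting this order would let us take the complement of $\theta(E)$ to be the order-trivial part $\Inf(\Omega)$ itself. But the computation above shows that $\theta(E)\oplus\Inf(\Omega)$ is only an index-$3$ subgroup of $H^1(\Omega)$, so $\Inf(\Omega)$ is a proper index-$3$ subgroup of every complement and can never be a complement. I do not anticipate a genuine obstacle: both halves are bookkeeping on top of Corollary \ref{for split cohomology} and Lemma \ref{lem-2}. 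The only subtlety is conceptual rather than computational, namely that although $\theta(E)\cong\Z[1/2]\cong\freq(\Omega)$ and $\Inf(\Omega)\cong\Z\cong C$ as abstract groups, no single direct-sum decomposition of $H^1(\Omega)$ can realize $\theta(E)$ and $\Inf(\Omega)$ as complementary summands at once, precisely because their internal sum misses $H^1(\Omega)$ by the factor $3$.
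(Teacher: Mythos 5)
Your proposal is correct, and for the splitting it takes a genuinely different route from the paper's. The paper computes $\coker\theta$ by hand: from the relations $A^{-1}(1,0)^T=(0,1)^T\sim -(1,0)^T$ and $A^{-2}(1,0)^T=\tfrac12(1,-1)^T\sim(1,0)^T$ it concludes that $\coker\theta$ is generated by the class of $(1,0)^T$, hence finitely generated, and then the splitting follows from torsion-freeness of the cokernel (Theorem \ref{coker torsion free}, via Corollary \ref{cor-6}). You instead verify the hypotheses of Proposition \ref{split cohomology} and Corollary \ref{for split cohomology}: the eigenvalues of $A$ are $2$ and $-1$, the subspace $V$ corresponding to $\theta(E\otimes_\Z\Q)$ is the $2$-eigenline $\Q\,(1,1)^T$ (using $\theta(E)=\Z[1/2](1,1)^T$ from the setup), so $\bar A$ is multiplication by $-1$ and $\det\bar A=\pm1$; the general criterion then gives $H^1(\Omega)\cong E\oplus\Z^l$ with $l=1$ at once. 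Your route is more conceptual and exhibits the example as an instance of the general machinery; the paper's computation buys explicit information, namely a concrete generator $[(1,0)^T]$ of $\coker\theta$, from which the location of the infinitesimals is immediate (the paper calls it ``clear'': $(-1,2)^T\equiv -3(1,0)^T \bmod \theta(E)$). You recover that same fact more formally, and independently of any choice of complement, via $C/p(\Inf(\Omega))\cong H^1(\Omega)/\bigl(\theta(E)+\Inf(\Omega)\bigr)$ together with the index-$3$ statement established before the proposition; the order argument is then essentially the same in both. One phrase you should repair: ``$\Inf(\Omega)$ is a proper index-$3$ subgroup of every complement'' is literally false, since $\Inf(\Omega)=\Z(-1,2)^T$ is not contained in, say, the complement $\Z(1,0)^T$; what is true --- and what your preceding sentences correctly establish, and what the paper's own wording also intends --- is that the \emph{image} of $\Inf(\Omega)$ under the projection to any complement has index $3$, and your conclusion that $\Inf(\Omega)$ can never itself serve as a complement stands.
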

\begin{proof}
$\coker\theta$ is given by the quotient $ \bigcup_{n\in\N} A^{-n}\Z^2/\sim$ where $x\sim y$ if there exist $k,n$ such that $x-y = \frac{k}{2^n} \begin{pmatrix} 1 \\1 \\ \end{pmatrix}$. It follows that
$ \begin{pmatrix} 3 \\0 \\ \end{pmatrix} \sim   \begin{pmatrix} -1 \\2 \\ \end{pmatrix}$,
$ A^{-1} \begin{pmatrix} 1 \\0 \\ \end{pmatrix} =  \begin{pmatrix} 0
  \\1 \\ \end{pmatrix} \sim - \begin{pmatrix} 1 \\0 \\ \end{pmatrix}$,
and $ A^{-2} \begin{pmatrix} 1 \\0 \\ \end{pmatrix} =
\frac12\begin{pmatrix} 1 \\ -1 \\ \end{pmatrix} \sim \begin{pmatrix} 1
  \\0 \\ \end{pmatrix}$. Thus
$\coker\theta$ is thus generated by the equivalence class of the
element $\begin{pmatrix} 1 \\0 \\ \end{pmatrix}$. In particular it is
finitely generated and thus the sequence splits. 

Now it is clear that the infinitesimal elements form an index $3$
subgroup of the second summand. The only possible orderings on $\Z$
are the trivial order, in which case all elements are infinitesimal,
or the standard order, in which only $0$ is
infinitesimal. The above splitting of $H^1(\Omega)$ is thus not an
order preserving 
splitting into a direct sum of ordered groups.
\end{proof}
%%%%%%%%% 
\begin{prop}
The exact sequence (\ref{exact sequence 1}) does not split.
\end{prop}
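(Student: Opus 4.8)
The plan is to rule out a splitting directly from the explicit description of $H^1(\Omega)$ obtained above, by the same mechanism as in \ref{ex-3l} but now adapted to the $\Inf$/$\freq$ sequence rather than the $E$/cokernel sequence. A splitting of (\ref{exact sequence 1}) is equivalent to the existence of a retraction $r\colon H^1(\Omega)\to\Inf(\Omega)$ with $r|_{\Inf(\Omega)}=\mathrm{id}$. Since $\Inf(\Omega)=\Z\begin{pmatrix}-1\\2\end{pmatrix}\cong\Z$, such an $r$ is the same data as a group homomorphism $\rho\colon H^1(\Omega)\to\Z$ with $\rho\bigl(\begin{pmatrix}-1\\2\end{pmatrix}\bigr)=1$, via $r(v)=\rho(v)\begin{pmatrix}-1\\2\end{pmatrix}$. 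So the whole task reduces to showing that no such $\rho$ exists.

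First I would pass to rational coefficients. Because $H^1(\Omega)\otimes_\Z\Q\cong\Q^2$, any homomorphism $\rho\colon H^1(\Omega)\to\Z$ extends uniquely to a $\Q$-linear functional, so $\rho(v)=\langle u,v\rangle$ for a unique $u\in\Q^2$, and it suffices to show that no $u$ can satisfy all the integrality requirements. The subgroup $\im\theta=\Z[1/2]\begin{pmatrix}1\\1\end{pmatrix}$ forces $\rho\bigl(\tfrac1{2^n}\begin{pmatrix}1\\1\end{pmatrix}\bigr)=2^{-n}(u_1+u_2)\in\Z$ for every $n$, which is possible only if $u_1+u_2=0$; equivalently, $u$ must be proportional to the left eigenvector $(1,-1)$ of $A$ belonging to the eigenvalue $-1$ (the eigenvector annihilating $\im\theta$, mirroring the role played in \ref{ex-3l} by the left Perron eigenvector annihilating $\Inf(\Omega)$). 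Imposing the normalization $\rho\bigl(\begin{pmatrix}-1\\2\end{pmatrix}\bigr)=-u_1+2u_2=1$ then determines $u$ completely as $u=(-\tfrac13,\tfrac13)$.

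The contradiction is then extracted from the generator of the index-$3$ quotient. Since $\im\theta+\Inf(\Omega)$ has index $3$ in $H^1(\Omega)$ with quotient $\Z/3$ generated by the class of $\begin{pmatrix}1\\0\end{pmatrix}$, this element lies in $H^1(\Omega)$ and any admissible $\rho$ must send it into $\Z$; but $\rho\bigl(\begin{pmatrix}1\\0\end{pmatrix}\bigr)=u_1=-\tfrac13\notin\Z$. Hence no such $\rho$, and therefore no splitting, can exist. The step I expect to carry the weight is the over-determination in the middle paragraph: the $2$-divisibility of $\im\theta$ confines $u$ to the line $\R(1,-1)$, while the normalization on $\Inf(\Omega)$ fixes the remaining scale, so that the value on the index-$3$ coset representative is forced to be a non-integer; once that is recognized, the remainder is routine arithmetic with the explicit eigendata of $A$.
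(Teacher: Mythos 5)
Your proof is correct and is essentially the paper's own argument: both reduce a splitting of (\ref{exact sequence 1}) to a retraction $H^1(\Omega)\to\Inf(\Omega)\cong\Z$, extend it to a $\Q$-linear functional $v\mapsto\langle u,v\rangle$ on $\Q^2$, use the infinite $2$-divisibility of $\im\theta$ to force $u$ to be proportional to $(1,-1)$, and then obtain the contradiction by playing the integrality of the functional at $(1,0)^T$ against its normalization on the generator of $\Inf(\Omega)$. The only difference is the (immaterial) order in which these last two constraints are combined --- the paper first forces $t\in\Z$ from $(1,0)^T$ and then contradicts $3t=1$, while you first solve $u=(-\tfrac13,\tfrac13)$ from the normalization and then contradict integrality at $(1,0)^T$.
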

\begin{proof} The proof is like the above:
Suppose the sequence $0\to \Z \stackrel{\psi}{\to} \bigcup_{n\in\N} A^{-n}\Z^2 \to \frac13 \Z[1/2]\to 0$ splits, where $\psi(1) = \begin{pmatrix}1 \\-2 \end{pmatrix}$.
There is then a $\Z$-module homomorphism $s:\bigcup_{n\in\N} A^{-n}\Z^2\to \Z$ with $s\circ \psi=id$. This $s$ extends to a $\Q$-linear map $s:\Q^2\cong\bigcup_{n\in\N} A^{-n}\Z^2\otimes_\Z\Q\to\Q\cong\Z \otimes_\Z\Q$. 
There is then a $w\in\Q^2$ so that $s(v)=\langle w,v\rangle$ for all $v\in\Q^2$.

Since each element of $\Z[1/2] \begin{pmatrix} 1 \\ 1 \end{pmatrix}$ is infinitely divisible by $2$ in $\bigcup_{n\in\N} A^{-n}\Z^2$ also $s(v)=\langle w,v\rangle$ is infinitely divisible by $2$ in $\Z$ for each $v\in \Z[1/2] \begin{pmatrix}1 \\1  \end{pmatrix}$. This means that $w$ is orthogonal to $\begin{pmatrix}1 \\1 \end{pmatrix}$, say $w=t \begin{pmatrix}
 1 \\-1  \end{pmatrix}$ with $t\in\Q$. Then $s(\begin{pmatrix}
 1 \\
 0 \\
 \end{pmatrix}) = \langle t \begin{pmatrix}
 1 \\
-1 \\
 \end{pmatrix},\begin{pmatrix}
 1 \\
 0 \\
 \end{pmatrix}\rangle=t$ must lie in $\Z$; that is, $t\in\Z$. 
  Now the restriction of $s$ to $\psi(\Z)=\Z 
 \begin{pmatrix}1 \\-2  \end{pmatrix}$ is surjective so there is an $x\in\Z$ such that
 $s(x \begin{pmatrix}
 1 \\
 -2 \\
 \end{pmatrix})=\langle t \begin{pmatrix}
 1 \\
 -1 \\
 \end{pmatrix},x\begin{pmatrix}
 1 \\
 -2 \\
 \end{pmatrix}\rangle=tx3=1$. But this implies that $1/3$ is an integer.
\end{proof}
%%%%%%%%%%%%%%%%%%%% 
% \subsubsection{Period doubling with collaring}
% It suffices to consider left collaring as the substitution is right border forcing. On left collared tiles
% $X = b(a)$, $Y = a(a)$, $Z = a(b)$ the substitution reads 
%  $$X\mapsto YZ, \quad Y\mapsto XZ,\quad Z\mapsto XY$$
% and so the substitution matrix is
% \[\sigma=\begin{pmatrix} 0 & 1 & 1\\ 1 & 0 & 1 \\ 1 & 1 & 0  \end{pmatrix}
% = \frac13 \begin{pmatrix} 1 & 1 & 0\\ 1 & -1 & 1 \\ 1 & 0 & -1  \end{pmatrix}
% \begin{pmatrix} 2 & 0 & 0\\ 0 & -1 & 0 \\ 0 & 0 & -1  \end{pmatrix}
% \begin{pmatrix} 1 & 1 & 1\\ 2 & -1 & -1 \\ -1 & -1 & 2  \end{pmatrix}.\]
% Furthermore, $\im\delta^T$ is generated by $(1,0-1)^T$. Hence
% \[\bigcup_n \sigma^{-n}\Z^3 = \bigcup_{k=0}^2 \tilde G + (k,0,0)^T\]
% where
% \[ \tilde G = \Z[1/2] (1,1,1)^T + A_2 \]
% ($A_2$ is the root lattice of $SU(3)$ in $\Z^3$). Finally,
% \[ H^1(\Omega) \cong \bigcup_{k=0}^2 G + [(k,0,0)^T]\]
% where $G = \tilde G / \im\delta^T\cong\Z[1/2]  \oplus \Z $. 
%%%%%%%%%%%%%%%%%%%% 
\subsection{Thue-Morse}
\renewcommand{\o}{\bar{1}}
We finally consider the Thue-Morse substitution
$$1\mapsto 1\o, \quad \o\mapsto \o1,$$
whose letters have equal length when realized as tiles of a one-dimensional tiling. We fix this length to be $1$ and so this fixes the action of $\R$ on the continuous hull $\Omega$.
The expansion matrix is $\Lambda=(2)$. 

The substitution does not force its border and so we use the technique of collared tiles (the bracketed tile is the actual tile, the other two are the collar):
$$a := 1(\o)1,\quad b:= \o(\o)1,\quad c:=1(\o)\o,\quad \bar{a} := \o(1)\o,\quad \bar{b}:= 1(1)\o,
\quad \bar{c}:=\o(1)1.$$
The Anderson-Putnam complex $\Gamma$ has $6$ edges, namely the collared tiles which we orient to the right in the tiling, and $4$ vertices $v,\bar{v},w,\bar{w}$. Indeed $w$ is a vertex at the end of $c$ and the beginning of $\bar{b}$ (and  $\bar w$ is a vertex at the end of $\bar c$ and the beginning of ${b}$), and $v$ is a vertex at the end of $b$, $\bar a$ and the beginning of $a$, $c$ (and  $\bar v$ is a vertex at the end of $\bar b$, $a$ and the beginning of $\bar a$, $\bar c$).
Its cohomology is thus that of the complex
$$ 0 \to \Z^4 \stackrel{\delta^T}\to \Z^6 \to 0$$
where\footnote{we use the bases: $a$, $b$, $c$, $\bar a$, $\bar b$, $\bar c$, and
$v$, $\bar v$, $w$, $\bar w$.}
$$ \delta =  \begin{pmatrix}
 1 & 1 & 0 & -1 & 0 & -1 \\
 -1 & 0 & -1 & 1 & 1 & 0 \\
 0 & -1 & 1 & 0 & 0 & 0 \\
 0 & 0 & 0 & 0 & -1 & 1
 \end{pmatrix}.$$
In particular, $H^1(\Gamma) \cong \Z^3$. We now have to determine the matrix
$A$ corresponding to the endomorphism induced by the substitution on $H^1(\Gamma)$. The latter reads on collared tiles as follows: 
$$a \mapsto b\bar{c},\quad b\mapsto a\bar{c},\quad c\mapsto b\bar{a},
\quad \bar{a} \mapsto \bar{b}c,\quad 
\bar{b}\mapsto \bar{a}c,\quad \bar{c}\mapsto\bar{b}a$$ 
which has incidence matrix $\sigma_{ij}$ (equal to the number of tiles of type $i$ in the supertile of type $j$)
$$\sigma =  \begin{pmatrix}
 0 & 1 & 0 & 0 & 0 & 1 \\
 1 & 0 & 1 & 0 & 0 & 0 \\
 0 & 0 & 0 & 1 & 1 & 0 \\
 0 & 0 & 1 & 0 & 1 & 0 \\
 0 & 0 & 0 & 1 & 0 & 1 \\
 1 & 1 & 0 & 0 & 0 & 0
 \end{pmatrix}.$$
%So 
%\begin{equation}\label{eq-H-quotient}
%H^1(\Omega) \cong \{ x\in \R^6:\exists n \left(\sigma^T\right)^n x \in \Z^6\}/\{ x\in \R^6:\exists n \left(\sigma^T\right)^n x \in \delta^T(\Z^4)\}. 
%\end{equation}
We find that the left-eigenvectors  of $\sigma$ which are not in $\im\delta^T$ are,
$  \begin{pmatrix} 1 & 1 & 1 & 1 & 1 & 1 \end{pmatrix}$ to eigenvalue $2$,
$  \begin{pmatrix} 1 & 1 & 1 & -1 & -1 & -1 \end{pmatrix}$ to eigenvalue $0$, and
$  \begin{pmatrix} 1 & -1 & 0 & 1 & -1 & 0 \end{pmatrix}$ to eigenvalue $-1$.
It follows that the eventual range of the endomorphsim $A$ induced by $\sigma^T$ on $\Z^6/\im\delta^T$ has dimension $2$ and that the restriction $\tilde A$ of $A$ to its essential range is obtained by row-reducing the above left-eigenvectors of $\sigma$ to eigenvalues $2$ and $-1$ w.r.t.\ the rows spanning $\im\delta^T+\ker\sigma^T$.
 The result is
 $  \begin{pmatrix} 2 & 4 & 0 & 0 & 0 & 0 \end{pmatrix}$, the left-eigenvector of $\sigma$ modulo $\langle\im\delta^T,\ker\sigma^T\rangle$ to eigenvalue $2$, and
$  \begin{pmatrix} 2 & -2 & 0 & 0 & 0 & 0 \end{pmatrix}$ the one to eigenvalue $-1$. It follows that
\[\tilde A=\frac13 \begin{pmatrix}
1 & 1 \\
2 & -1 \\
\end{pmatrix}\begin{pmatrix}
2 & 0 \\
0 & -1 \\
\end{pmatrix}\begin{pmatrix}
1 & 1 \\
2 & -1 \\
\end{pmatrix}=\begin{pmatrix}
0 & 1 \\
2 & 1 \\
\end{pmatrix}.\]
Hence
\[H^1(\Omega) \cong \bigcup_n \tilde A^{-n}\Z^2 = \Z[1/2]
\begin{pmatrix}
1 \\
2 \\
\end{pmatrix} +
\Z\begin{pmatrix}
1 \\
-1 
\end{pmatrix} + \bigcup_{k=1}^2%\bigcup_{n\in\N} A^{-n}
\begin{pmatrix}k  \\ 0 \\
\end{pmatrix} .\]
As above one sees that  $E=\Z[1/2]$ and that $f_{2^{-n}}(T_0-t) = \exp(2\pi\imath 2^{-n}t)$ is strongly pattern equivariant and so extends to an eigenfunction to eigenvalue $2^{-n}$. Furthermore the eigenfunction $f_1$ represents the class in $H^1(\Omega)$ given by the class of $(1,1,1,1,1,1)^T$ modulo $\langle\im\delta^T,\ker\sigma^T\rangle$
which corresponds to $(2,4)^T$ in 
$\bigcup_n \tilde A^{-n}\Z^2$. Hence
$$ \theta(E) = \Z[1/2]
\begin{pmatrix}
1 \\
2 \\
\end{pmatrix} .
$$
More generally, since $\tau(v)=0$ for any left-eigenvector of $\sigma$ to an eigenvalue different to $2$ the map $\tau:\bigcup_n \tilde A^{-n}\Z^2 \to \R$ is given by $\tau((2,4)^T) = 1$ and $\tau((1,-1)^T)=0$.
It follows that \[ \tau ( A^{-n} v ) = \frac16 2^{-n} (v_1+v_2),\]
 $\freq(\Omega) = \frac13\Z[1/2]$, and $\Inf(\Omega) =  \Z \begin{pmatrix}
 1 \\
 -1 \\
 \end{pmatrix}$.
 Now the same calculation as for the period doubling sequence yields:
\begin{prop} The exact sequence (\ref{exact sequence 3}) splits and hence
$H^1(\Omega) \cong \Z[1/2] \oplus \Z$. This splitting does not respect
the order as infinitesimal elements form an index $3$ subgroup of the
second summand.
The exact sequence (\ref{exact sequence 1}) does not split.
\end{prop}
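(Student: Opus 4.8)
The plan is to run, line for line, the computation already carried out for the period doubling and Thue--Morse data recorded above, using $H^1(\Omega)\cong\bigcup_n\tilde A^{-n}\Z^2$ with $\tilde A=\begin{pmatrix}0&1\\2&1\end{pmatrix}$, together with $\theta(E)=\Z[1/2]\begin{pmatrix}1\\2\end{pmatrix}$, $\Inf(\Omega)=\Z\begin{pmatrix}1\\-1\end{pmatrix}$ and $\freq(\Omega)=\frac13\Z[1/2]$. The key structural fact I would exploit is that the eigenvectors of $\tilde A$ are $\begin{pmatrix}1\\2\end{pmatrix}$ (eigenvalue $2$) and $\begin{pmatrix}1\\-1\end{pmatrix}$ (eigenvalue $-1$); hence $\tilde A$ preserves $\theta(E)$ and induces multiplication by $-1$ on the quotient $\coker\theta$.

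First I would establish that the sequence (\ref{exact sequence 3}) splits by showing $\coker\theta=H^1(\Omega)/\theta(E)$ is infinite cyclic. Since $\tilde A$ preserves $\theta(E)$ and acts as $-\mathrm{id}$ on the quotient, every generator $\tilde A^{-n}\begin{pmatrix}1\\0\end{pmatrix}$ and $\tilde A^{-n}\begin{pmatrix}0\\1\end{pmatrix}$ of $H^1(\Omega)$ reduces modulo $\theta(E)$ to $\pm$ the class of $\begin{pmatrix}1\\0\end{pmatrix}$ or of $\begin{pmatrix}0\\1\end{pmatrix}$. The relation $\begin{pmatrix}1\\2\end{pmatrix}\in\theta(E)$ gives $[\begin{pmatrix}1\\0\end{pmatrix}]=-2[\begin{pmatrix}0\\1\end{pmatrix}]$, so $\coker\theta$ is generated by the single class $v:=[\begin{pmatrix}0\\1\end{pmatrix}]$. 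As $\coker\theta$ is torsion free by Theorem \ref{coker torsion free} and $v$ has infinite order (if $m\begin{pmatrix}0\\1\end{pmatrix}\in\Z[1/2]\begin{pmatrix}1\\2\end{pmatrix}$ then $m=0$), it follows that $\coker\theta\cong\Z$. Being finitely generated, Corollary \ref{cor-6} then yields $H^1(\Omega)\cong E\oplus\coker\theta\cong\Z[1/2]\oplus\Z$. For the order statement I would compute the image of $\Inf(\Omega)$ in $\coker\theta$: from $\begin{pmatrix}1\\-1\end{pmatrix}=\begin{pmatrix}1\\0\end{pmatrix}-\begin{pmatrix}0\\1\end{pmatrix}$ one gets $[\begin{pmatrix}1\\-1\end{pmatrix}]=-2v-v=-3v$, so (using $\Inf(\Omega)\cap\theta(E)=0$, which follows from Lemma \ref{lem-2}) $\Inf(\Omega)$ injects onto the index-$3$ subgroup $3\Z v$ of the second summand. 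Then I would invoke the period-doubling argument verbatim: the only orders on $\Z$ are the trivial one (all elements infinitesimal) and the standard one (only $0$ infinitesimal), and an index-$3$ infinitesimal subgroup matches neither, so the splitting cannot respect the order.

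Finally, to show (\ref{exact sequence 1}) does not split I would argue by contradiction in the style of the earlier non-splitting proofs. Assume a splitting $s:H^1(\Omega)\to\Inf(\Omega)\cong\Z$ with $s\circ\psi=\mathrm{id}$ and $\psi(1)=\begin{pmatrix}1\\-1\end{pmatrix}$, and extend $s$ to a $\Q$-linear functional $s(\cdot)=\langle w,\cdot\rangle$ on $\Q^2$. Every element of $\theta(E)=\Z[1/2]\begin{pmatrix}1\\2\end{pmatrix}$ is infinitely $2$-divisible in $H^1(\Omega)$, so its image under $s$ is infinitely $2$-divisible in $\Z$ and hence $0$; thus $w\perp\begin{pmatrix}1\\2\end{pmatrix}$, i.e.\ $w=t\begin{pmatrix}2\\-1\end{pmatrix}$ for some $t\in\Q$. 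Evaluating on $\begin{pmatrix}0\\1\end{pmatrix}\in H^1(\Omega)$ gives $s(\begin{pmatrix}0\\1\end{pmatrix})=-t\in\Z$, so $t\in\Z$. But then $s(\psi(1))=\langle t\begin{pmatrix}2\\-1\end{pmatrix},\begin{pmatrix}1\\-1\end{pmatrix}\rangle=3t$ would have to equal $1$, forcing $3\mid 1$, a contradiction.

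The genuinely new bookkeeping compared with period doubling is that here the generator of $\coker\theta$ is the class of $\begin{pmatrix}0\\1\end{pmatrix}$ rather than of $\begin{pmatrix}1\\0\end{pmatrix}$, precisely because $\theta(E)$ now lies along $\begin{pmatrix}1\\2\end{pmatrix}$; getting this identification right is the main bookkeeping hurdle. The step I expect to require the most care is pinning down $t\in\Z$ in the non-splitting argument: evaluating $s$ on $\begin{pmatrix}1\\0\end{pmatrix}$ only yields $2t\in\Z$, so one must test against $\begin{pmatrix}0\\1\end{pmatrix}$ to force $t\in\Z$ before the contradiction $3t=1$ can be drawn.
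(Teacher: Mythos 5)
Your proposal is correct and follows essentially the same route as the paper, whose proof of this proposition is literally to repeat the period doubling calculation with the Thue--Morse data: you identify $\coker\theta\cong\Z$, invoke Corollary~\ref{cor-6} for the splitting of (\ref{exact sequence 3}), compute that $\Inf(\Omega)$ maps onto an index-$3$ subgroup of the second summand for the order statement, and run the same $\Q$-linear functional divisibility contradiction for the non-splitting of (\ref{exact sequence 1}). Your two refinements---organizing the cokernel computation via the eigenvectors of $\tilde A$ rather than explicit relation-chasing, and noting that one must evaluate $s$ on $(0,1)^T$ rather than $(1,0)^T$ to force $t\in\Z$ before deriving $3t=1$---are correct bookkeeping within that same method, not a different approach.
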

\subsection{The action of the substitution on first cohomology}
As a final observation, let us consider complex valued cohomology looking at possibly complex eigenvalues of the action of $\Phi^*$ on the first cohomology. 
It is shown in \cite{BG} that all eigenvalues of $\Lambda$ are also eigenvalues of $\Phi^*$.
\begin{theorem}
The image of $\tau$ is invariant under the action of $\Lambda^T$. Moreover,
suppose that $\Phi^* x = \lambda x$ for some cohomology element $x\in H^1(\Omega,\mathbb C)$. If $\tau(x)\neq 0$ then $\lambda$ must be an eigenvalue of $\Lambda$. 
\end{theorem}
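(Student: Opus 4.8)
The plan is to reduce both assertions to a single intertwining identity, namely that
$$\tau\circ\Phi^* \;=\; \Lambda^T\circ\tau$$
as homomorphisms $H^1(\Omega)\to{\R^n}^*$. Granting this identity, the first assertion is immediate: for any $x\in H^1(\Omega)$ one has $\Lambda^T\tau(x)=\tau(\Phi^*x)\in\im\tau$, so $\im\tau$ is carried into itself by $\Lambda^T$ (in fact onto itself, since $\Phi$ is a homeomorphism and $\Phi^*$ is therefore an automorphism of $H^1(\Omega)$). Note that on the subgroup $\im\theta$ this identity already follows from the relations $\Phi^*\theta(\beta)=\theta(\Lambda^T\beta)$ and $\tau\circ\theta=\imath$ established above; the content below is to extend it to all of $H^1(\Omega)$.

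To prove the identity I would use the explicit description $\tau = C_\mu\circ\psi$. Represent a class by a smooth $f:\Omega\to S^1$, so that $\Phi^*[f]=[f\circ\Phi]$. Differentiating the relation $\Phi(T-tv)=\Phi(T)-t\Lambda v$ in direction $v\in\R^n$ gives, for the Lie-algebra derivative, $\bigl(d(f\circ\Phi)\bigr)(v)(T)=(df)(\Lambda v)(\Phi(T))$, whence
$$\psi([f\circ\Phi])(v)(T)=\psi([f])(\Lambda v)(\Phi(T)).$$
Integrating against $\mu$ and performing the change of variables $T\mapsto\Phi(T)$ then yields $\tau([f\circ\Phi])(v)=\tau([f])(\Lambda v)=(\Lambda^T\tau([f]))(v)$, which is the desired identity.

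The step I expect to carry the real content is this change of variables, which requires that $\Phi$ preserve the ergodic measure, $\Phi_*\mu=\mu$. This I would justify by unique ergodicity: the conjugacy $\Phi\circ(T\mapsto T-v)=(T\mapsto T-\Lambda v)\circ\Phi$ shows that $\Phi_*\mu$ is again an $\R^n$-invariant (hence ergodic) probability measure on $\Omega$, so strict ergodicity forces $\Phi_*\mu=\mu$. Everything else is a matter of keeping the sign and direction conventions straight in the Lie-algebra computation.

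Finally, for the second assertion I would extend $\tau$ complex-linearly to a map $H^1(\Omega,\mathbb C)\to{\R^n}^*\otimes\mathbb C$ and apply the identity to an eigenclass: if $\Phi^*x=\lambda x$, then $\Lambda^T\tau(x)=\tau(\Phi^*x)=\lambda\tau(x)$. When $\tau(x)\neq 0$ this exhibits $\tau(x)$ as a nonzero $\lambda$-eigenvector of $\Lambda^T$, so $\lambda$ is an eigenvalue of $\Lambda^T$ and hence of $\Lambda$, since a matrix and its transpose share the same spectrum.
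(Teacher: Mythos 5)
Your proposal is correct, and its skeleton matches the paper's: both proofs reduce everything to the intertwining identity $\tau\circ\Phi^*=\Lambda^T\circ\tau$, both obtain it from the chain-rule computation based on $\Phi(T-tv)=\Phi(T)-t\Lambda v$ applied to the form $\frac{1}{2\pi\imath}f^{-1}df$, and both then read off the two assertions exactly as you do (eigenvector of $\Lambda^T$, same spectrum as $\Lambda$). Where you genuinely diverge is in the measure-theoretic step that turns the pointwise identity $\psi([f\circ\Phi])(v)(T)=\psi([f])(\Lambda v)(\Phi(T))$ into an identity of integrals. The paper invokes unique ergodicity to write $C_\mu(\alpha)=\lim_k \frac{1}{\nu(I_k)}\int_{I_k}\alpha(T_0-x)\,d\nu(x)$ over expanding cubes for an arbitrary base point $T_0$, and then (implicitly) performs the change of variables $x\mapsto\Lambda x$ in $\R^n$, using that averages over the dilated regions $\Lambda I_k$ and over the orbit of $\Phi(T_0)$ still converge to $C_\mu$. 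You instead establish $\Phi_*\mu=\mu$ --- correctly, via the conjugacy making $\Phi_*\mu$ an $\R^n$-invariant probability measure and then invoking unique ergodicity --- and change variables directly on $\Omega$. Your route is arguably cleaner: it isolates the invariance of $\mu$ under $\Phi$ as the single measure-theoretic input and avoids the unstated point that unique ergodicity yields convergence of averages along the non-cubical Van Hove sequence $\Lambda I_k$; the paper's route avoids mentioning $\Phi_*\mu=\mu$ at the cost of leaving that averaging argument implicit. Both are legitimate deployments of the same hypothesis (strict ergodicity), so the difference is one of packaging rather than of substance, but your packaging makes the proof easier to verify line by line.
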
  
\begin{proof}
Note that, by unique ergodicity,
\[ C_\mu (\alpha) = \lim_{k\to \infty} \frac1{\mu(I_k)} \int_{I_k} \alpha(T_0-x) d\nu(x)\]
where $I_k = [-k,k]^n$ is the cube of side length $2k$ centered at $0$ and $\nu$ is the 
Lebesgue measure. This holds for any $T_0$.  
We have $\Phi^*(f^{-1}df)(T_0-x) = \Lambda^T f^{-1}(\Phi(T_0)-\Lambda x) df(\Phi(T_0)-\Lambda x)$. Hence $C_\mu (\Phi^*(f^{-1}df))= \Lambda^TC_\mu (f^{-1}df)$ implying 
 that $\tau([f\circ \Phi]) = \Lambda^T\tau([f])$. So if $x$ is an eigenvector of $\Phi^*$ to eigenvalue $\lambda$ and $\tau(x)\neq 0$ then $\tau(x)$ is an eigenvector of 
$\Lambda^T$ to $\lambda$.
\end{proof}

 Thus, if $x$ is in a generalized eigenspace of $\Phi^*$ corresponding to an eigenvalue that is not also an eigenvalue of $\Lambda$, then $x\in \ker(\tau)$. In the case $n=1$, such $x\in [\Omega,S^1]$
must be infinitesimal (see Subsection \ref{order structure} and example \ref{period doubling}).
\bigskip

\end{document}